\newcommand{\word}[1]{\textbf{#1}} 
\crefname{enumi}{}{} 
\theoremstyle{definition}
\newtheorem{dfn}{Definition}[section]
\theoremstyle{plain}
\newtheorem{thm}[dfn]{Theorem}
\newtheorem{zbthm}[dfn]{Example/Theorem}
\newtheorem{prop}[dfn]{Proposition}
\newtheorem{lem}[dfn]{Lemma}
\newtheorem{cor}[dfn]{Corollary}
\crefname{cor}{Corollary}{Corollaries}
\theoremstyle{remark}
\newtheorem{zbbackend}[dfn]{Example}
\newenvironment{zb}[1][]{\begin{zbbackend}[#1]}{\hspace*{\fill}$\diamondsuit$\end{zbbackend}}
\newtheorem{nzbbackend}[dfn]{Non-example}
\newtheorem{rmk}[dfn]{Remark}
\DeclareMathOperator{\rk}{rk} 
\newcommand{\clan}[1]{\overline{#1}^{\mathrm{an}}} 
\newcommand{\K}{\mathbb{K}} 
\renewcommand{\L}{\mathcal{L}} 
\renewcommand{\P}{\mathbb{P}} 
\newcommand{\R}{\mathbb{R}} 
\newcommand{\SL}{\operatorname{SL}} 
\def\a{\alpha} 
\def\b{\beta} 
\newcommand{\CP}{\ensuremath{\mathcal{P}}\xspace}
\newcommand{\BR}{\ensuremath{\mathbb {R}}\xspace}
\title{Total positivity for matroid Schubert varieties}
\author[Xuhua He]{Xuhua He}
\address{Department of Mathematics and New Cornerstone Science Laboratory, The University of Hong Kong, Pokfulam, Hong Kong, Hong Kong SAR, China}
\email{xuhuahe@hku.hk}
\author[Connor Simpson]{Connor Simpson}
\address{Department of Mathematics, University of Wisconsin---Madison, 480 Lincoln Drive, Madison WI 53706-1388, USA}
\email{csimpson6@wisc.edu}
\author{Kaitao Xie}
\address{Department of Mathematics and New Cornerstone Science Laboratory, The University of Hong Kong, Pokfulam, Hong Kong, Hong Kong SAR, China}
\email{kaitaoxie@connect.hku.hk}
\begin{document}
\maketitle
\begin{abstract}
  We define the totally nonnegative matroid Schubert variety $\mathcal Y_V$ of a linear subspace $V \subset \mathbb R^n$.
  We show that $\mathcal Y_V$ is a regular CW complex homeomorphic to a closed ball, with strata indexed by pairs of acyclic flats of the oriented matroid of $V$. This closely resembles the regularity theorem for totally nonnegative generalized flag varieties.
  As a corollary, we obtain a regular CW structure on the real matroid Schubert variety of $V$.
\end{abstract}
\section{Introduction}

\subsection{Matroids}\label{intro:matroids}
Matroids model the combinatorics of linear subspaces, and have found broad application in and out of mathematics  \cite{rst18,recski10, iri83} since their formulation by Nakasawa \cite{Nak} and Whitney \cite{Whi}.
They enjoy a particularly close relationship with algebraic geometry \cite{katz14,ardila21}.

In this work we study the so-called ``matroid Schubert varieties''.
If $V \subset \K^n$ is a linear subspace, then its \word{matroid Schubert variety} $Y_V$ is the Zariski closure of $V$ in $(\P_\K^1)^n$, which contains $\K^n$ as an open subset.
Introduced by \cite{AB16}, matroid Schubert varieties are central to the proof of the Top Heavy Conjecture for realizable matroids \cite{HW17},  guide the conjecture's resolution for \textit{all} matroids \cite{BHMPW20b}, and  are the geometric model for matroidal Kazhdan-Lusztig theory \cite{EPW16}.
Preceding \cite{AB16}, a neighborhood of the most singular point of a matroid Schubert variety was studied in \cite{PS06} and \cite{T02}.

The geometry of $Y_V$ is controlled by the \word{flats} of $V$; that is, the sets $F \subset \{1, \ldots, n\}$ such that there is $v \in V$ whose zero coordinates are exactly those indexed by $F$. The flats of $V$ are an example of a \word{matroid}.
When $\K = \R$, we may consider the more refined notion of \word{covectors}, which record the combinations of signs that the coordinate functions of $\R^n$ can take on $V$.
This data gives an example of an \word{oriented matroid}.
Our main theorem says that oriented matroid data controls the geometry of the \word{totally non-negative matroid Schubert variety} $\mathcal Y_V := \clan{V \cap \R_{\geq 0}^n}$, the closure of $V \cap \R_{\geq 0}^n$ in $(\P_\R^1)^n$ with respect to the analytic topology.

\subsection{Total positivity} 
By definition, an invertible real matrix is called \word{totally positive} if all the minors are positive and  \word{totally non-negative} if all the minors are non-negative. These notions were introduced in the 1930s by Schoenberg~\cite{Sch}.  The theory of totally positive real matrices was further developed by Whitney and Loewner in the 1950s and found important applications in many different areas, including, for example, statistics, game theory, mathematical economics, and stochastic processes. We refer to the book by Karlin~\cite{Ka}  for detailed discussions. 

All  $n \times n$ invertible matrices form the general linear group, which is an example of a split reductive group. The theory of total positivity for an arbitrary split real reductive group was developed by Lusztig in his foundational work~\cite{Lus-1} and has had significant impacts on many active research directions, including, among others,
\begin{itemize}
	\item the theory of cluster algebras by Fomin and Zelevinsky~\cite{FZ},
	
	\item higher Teichm\"uller theory by Fock and Goncharov~\cite{FG}, 
	
	\item the theory of the amplituhedron by Arkani-Hamed and Trnka~\cite{AHT}. 
	
\end{itemize}	

It has also been discovered that many spaces with  $G$-action have natural positive structures. A typical example is the (partial) flag variety $\mathcal P$. This has a natural decomposition into (open) Richardson varieties: $\mathcal P=\sqcup_\a \mathcal P_{\a}$. This is a stratification, i.e., the closure of each $\CP_\a$ (under the Zariski topology) is a disjoint union of other Richardson varieties $\CP_\b$. On the other hand, Lusztig defined the totally non-negative flag $\mathcal P_{\ge 0}$. This is a semi-algebraic subvariety of $\mathcal B$. We then have the decomposition $$\mathcal P_{\ge 0}=\bigsqcup_{\a} \mathcal P_{\a, >0}, \quad \text{ where } \mathcal P_{>0}=\mathcal P_{\ge 0} \cap \mathcal P_{\a}.$$
Lusztig refers to the totally non-negative flag as a ``remarkable polyhedral space''. It has been studied by many leading experts: Bao, Galashin, Karp, Lam, Lusztig, Marsh, Postnikov, Rietsch, Williams, the first-named author, and others. They have established many remarkable geometric/topological properties, including the following: 
\begin{itemize}
	\item {\color{blue} Connected components}: $\mathcal P_{\a, >0}$ is a connected component of $\mathcal P_\a (\BR)$.
	
	\item {\color{blue} Cell structure}: $\mathcal P_{\a, >0} \cong \mathbb R_{>0}^{\dim \mathcal P_\a}$ is a semi-algebraic cell. 
	
	\item {\color{blue} Cellular decomposition}:  $\clan{\mathcal P_{\a, >0}}$ is a disjoint union of other totally positive cells $\mathcal P_{\beta, >0}$. 
	
	\item {\color{blue} Regularity property}: $\clan{\mathcal P_{\a, >0}}$ is a regular CW complex homeomorphic to a closed ball.
\end{itemize}

\subsection{Main result}\label{sec:mainresult}
One may expect that matroid Schubert varieties admits a ``nice'' positive structure, similar to the flag varieties. This is what we will establish in this paper.

\medskip
Let $E$ be a finite set.
If $V \subset \R^E$ is a linear subspace, then $Y_V \subset (\P_\R^1)^n$ can be decomposed as a disjoint union of locally closed ``Richardson varieties'' $Y_{FG}^\circ := Y_V \cap (0^F \times \R_{\neq 0}^{G \setminus F} \times \infty^{E \setminus F})$, with $F \subset G \subset E$ running over all flats of $V$.
For any sets $F \subset G \subset E$, we analogously define $\mathcal Y_{FG}^\circ := \mathcal Y_V \cap (0^F \times \R_{>0}^{G \setminus F} \times \infty^{E \setminus G})$, and let $\mathcal Y_{FG} := \clan{\mathcal Y_{FG}^\circ}$. Note that $\mathcal Y_{\emptyset,E} = \mathcal Y_V$ by definition.
Call a flat $F$ of $V$ \word{acyclic} if $V \cap (0^F \times \R_{>0}^{E \setminus F})$ is nonempty.
The \word{rank} of a flat is the codimension in $V$ of the subspace $V \cap \{x_i = 0 : i \in F\}$.

The main result of this paper is that the totally non-negative matroid Schubert variety is a ``remarkable polyhedral space''. More precisely, 
\begin{thm}\label{schubertthm}
  Let $V \subset \R^E$, with matroid Schubert variety $Y_V$ and totally non-negative Schubert variety $\mathcal Y_V$.
\begin{enumerate}
\item $\mathcal Y_{FG}^\circ$ is nonempty if and only if $F \subset G$ are acyclic flats of $V$.
  In this case, $\mathcal Y_{FG}^\circ$ is a single connected component of $Y_{FG}^\circ$, and is a semi-algebraic cell isomorphic to $(\R_{>0})^{\rk(G) - \rk(F)}$. \label{main:strata}

\item The closure $\mathcal Y_{FG}$ of a nonempty cell $\mathcal Y_{FG}^\circ$ decomposes as the disjoint union of cells $\mathcal Y_{F',G'}^\circ$ with $F \subset F' \subset G' \subset G$.
  \label{main:boundary}
\item This decomposition makes $\mathcal Y_{FG}$ a regular CW complex homeomorphic to a closed ball.\label{main:ball}
\end{enumerate}
\end{thm}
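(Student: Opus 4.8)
The starting point is that $\mathcal Y_V$ is nothing but the closure of a pointed polyhedral cone in a compactified torus: since $C:=V\cap\R_{\geq 0}^E$ is a polyhedral cone contained in $\R_{\geq 0}^E$, whose closure in $(\P^1_\R)^E$ is the cube $[0,\infty]^E$, we have $\mathcal Y_V=\overline{C}$, the closure of $C$ in $[0,\infty]^E$. I would first upgrade this: for $F\subseteq G$ with $F,G$ acyclic flats, the face $\sigma_F:=C\cap\{x_i=0:i\in F\}$ of $C$ is a genuine face with relative interior having zero set exactly $F$; sending its coordinates in $E\setminus G$ to infinity identifies $\mathcal Y_{FG}$, compatibly with all the subvarieties $\mathcal Y_{F'G'}^\circ$ inside it, with the closure $\overline{C_{FG}}$ of the pointed polyhedral cone $C_{FG}:=\pi_{G\setminus F}(\sigma_F)\subseteq\R^{G\setminus F}_{\geq 0}$ (here $\pi_{G\setminus F}$ is the coordinate projection). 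Now for \emph{any} pointed polyhedral cone $C'$ with relative interior meeting the open orthant, the closure $\overline{C'}$ decomposes into cells $(\sigma,\rho)$ indexed by nested pairs $\rho\subseteq\sigma$ of faces — the cell being the relative interior of $\sigma$ with the directions of $\rho$ pushed to infinity, of dimension $\dim\sigma-\dim\rho$ — and the faces of $C'$ are in bijection with the zero sets of its points, which are exactly the acyclic flats of $V$ lying in $[F,G]$; this matches the cells of $\overline{C_{FG}}$ with the pairs $\mathcal Y_{F'G'}^\circ$ for $F\subseteq F'\subseteq G'\subseteq G$ acyclic flats, of dimension $\rk(G')-\rk(F')$. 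So the whole theorem becomes a statement about compactified pointed polyhedral cones.

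Parts \cref{main:strata} and \cref{main:boundary} then follow from the combinatorics of this model. For \cref{main:strata}: $\mathcal Y_{FG}^\circ$ corresponds to the top cell $(\sigma,\rho)=(C_{FG},\{0\})$, hence is the relative interior of $C_{FG}$, a relatively open pointed polyhedral cone of dimension $\rk(G)-\rk(F)$, which is semi-algebraically isomorphic to $\R_{>0}^{\rk(G)-\rk(F)}$; nonemptiness is precisely the statement that $\sigma_F$ and $\sigma_G$ are honest faces with the correct zero sets, i.e. that $F$ and $G$ are acyclic flats — indeed $\overline{C}$ is the disjoint union of its cells, and every cell has its set of infinite coordinates equal to the complement of an acyclic flat, so no cell has the profile of $\mathcal Y_{FG}^\circ$ unless $G$ is acyclic; finally, in the evident affine chart $Y_{FG}^\circ$ is the complement, in a linear subspace, of the coordinate hyperplanes indexed by $G\setminus F$, so its connected components are its sign chambers and $\mathcal Y_{FG}^\circ$ is the all-positive one (one checks this chamber does lie in $\mathcal Y_V$ by adding a large multiple of a positive vector of $V$ supported on $E\setminus G$, which exists since $G$ is acyclic). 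For \cref{main:boundary}: the closure of a cell of a compactified pointed cone is the union of the cells below it in the face poset, where $(\sigma,\rho)\le(\sigma',\rho')$ iff $\sigma\subseteq\sigma'$ and $\rho\supseteq\rho'$; translating through the dictionary above, this says exactly $F'\subseteq G'$ with $F\subseteq F'$ and $G'\subseteq G$. These closure relations are a direct, if slightly tedious, computation with coordinatewise limits in $[0,\infty]^{G\setminus F}$.

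Part \cref{main:ball} is the crux. I would prove, by induction on $d$, the purely polyhedral statement: the closure $\overline{C'}$ of any pointed polyhedral cone $C'$ of dimension $d$ (with relative interior meeting the open orthant), with the cell structure above, is a regular CW complex homeomorphic to a closed $d$-ball. Every \emph{proper} closed cell of $\overline{C'}$ is itself homeomorphic to the closure of a pointed cone of dimension $<d$, so by induction the proper closed cells are regular CW balls and their attaching maps are homeomorphisms. Moreover $\overline{C'}$ is the closure of its unique top-dimensional cell (because $C'$ is the closure of its relative interior), so it remains to see that $\overline{C'}$ itself is a $d$-ball. For this I would check: (i) the boundary subcomplex $\partial\overline{C'}$ (all proper cells) is a connected pseudomanifold — each codimension-one proper cell lies in exactly two codimension-zero proper cells, using that a ridge of a pointed cone lies in exactly two facets and a two-dimensional pointed cone has exactly two extreme rays; (ii) $\partial\overline{C'}$ is shellable — shell the ``finite'' facets $\overline{\sigma}$ ($\sigma$ a facet of $C'$) along a line shelling of the cross-section polytope's boundary, then append the ``facets at infinity'' $\overline{(C',\rho)}$ ($\rho$ an extreme ray) — hence $\partial\overline{C'}\cong S^{d-1}$ by the Danaraj--Klee/Bj\"orner criterion; and (iii) $\overline{C'}$ is a compact contractible $d$-manifold with boundary — contractibility because a shellable complex with a single top cell is collapsible, and the manifold-with-boundary property because the link of each proper cell is again a ball by induction. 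A compact contractible $d$-manifold with boundary a sphere is homeomorphic to $B^d$, which closes the induction and yields \cref{main:ball}.

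The main obstacle is exactly this last part. Although the rigidity of the polyhedral model makes everything far more tractable than the analogous regularity theorem for totally nonnegative flag varieties, the compactification genuinely interleaves the finite faces of $C'$ with its faces at infinity: the naive coordinatewise contracting flows on $[0,\infty]$ do not preserve $\overline{C'}$ (they do not respect the cone structure), so one cannot argue by a single explicit deformation retraction and must instead work cell by cell — in particular setting up the shelling of $\partial\overline{C'}$ and verifying that links of cells are balls. An alternative, fully parallel to the flag-variety story, would be to verify against the explicit cone model the hypotheses of the general criterion for a stratified space to be a regular CW complex (the resolution of the Fomin--Shapiro conjecture); the cone model makes the required contractibility of links transparent enough that either route should succeed.
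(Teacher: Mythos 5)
Your framework---modeling $\mathcal Y_V$ as the closure of a pointed polyhedral cone inside $[0,\infty]^E$, deriving parts \cref{main:strata} and \cref{main:boundary} from the combinatorics of its faces and faces at infinity, and shelling the boundary by putting the ``finite'' facets before the ``facets at infinity''---matches the paper's strategy closely. Parts \cref{main:strata} and \cref{main:boundary} are essentially right; the paper reaches the same conclusions via \cref{Vbarpos}, \cref{infinity_avoidance}, \cref{geq0iso} (for nonemptiness and the cell description) together with \cref{aniso} and \cref{lem:contraction} (which identify $\mathcal Y_{FG}$ with a totally nonnegative Schubert variety of $\pi_G(V\cap\ker\pi_F)$, from which the boundary decomposition drops out). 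Your shelling order for $\partial\mathcal Y_V$ is the same one used in \cref{shelling}.

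The gap is in the last step of part \cref{main:ball}. You conclude from ``$\overline{C'}$ is a compact contractible $d$-manifold with boundary a $(d-1)$-sphere'' that $\overline{C'}\cong B^d$. That implication is precisely the generalized Poincar\'e conjecture in dimensions $3,4,5$ and the h-cobordism theorem in higher dimensions---the very tools the paper states it is bypassing (end of \cref{sec:mainresult}). Moreover, establishing that $\overline{C'}$ is a manifold with boundary requires controlling the link of \emph{every} point, including the all-infinite point $\infty^E$; that link is the link of the unique $0$-cell of the compactification, not of a proper positive-dimensional cell, so it is not covered by the induction you describe, and showing it is a sphere is not obviously simpler than the original problem. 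Finally, ``collapsible because shellable with a single top cell'' is circular here: $\overline{C'}$ is a $d$-complex with exactly one $d$-cell whose closure is all of $\overline{C'}$, and a shelling of such a complex already presupposes that its unique facet is a ball, which is what you are trying to prove.

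The paper's route around all of this is one explicit non-cellular homeomorphism. Fix $w$ in the open cell $\mathcal Y_{\emptyset,E}^\circ$ and set $\mu(y)=1-\exp(-\min_i y_i/w_i)$. Then $\psi(y)=\bigl(y+\ln(1-\mu(y))w,\;\mu(y)\bigr)$ identifies $\mathcal Y_V$ with the cone on $\mathcal Y_0$, the subcomplex of points having at least one zero coordinate (note: $\mathcal Y_0$, not the full boundary $\partial\mathcal Y_V$). It therefore suffices that $\mathcal Y_0$ be a closed ball, and this follows from Bj\"orner's criterion \cref{shellabletop} applied to the shelling of \cref{shelling}, together with thinness of the face poset (\cref{thm:thin}) and the observation that a $(d-1)$-cell $\mathcal Y_{F,G}$ with $\rk F=1$ and $G$ of corank $1$ lies in only one $d$-cell of $\mathcal Y_0$. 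No manifold recognition, Poincar\'e, or h-cobordism input is needed, and regularity of the CW structure then follows by running the argument inductively over closed strata. If you wish to keep your cone model and boundary shelling, the repair is to replace the ``compact contractible manifold with spherical boundary'' step with an explicit cone structure on $\overline{C'}$ over $\overline{C'}\cap\{y : y_i=0 \text{ for some } i\}$, which is exactly what $\psi$ provides.
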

Some comparison is due.
Combinatorially, we see a new phenomenon in the matroid setting.
The cells of $\CP_{\geq 0}$ and $\mathcal Y_V$ are obtained by intersecting these sets with real Richardson strata of $\CP$ and $Y_V$, respectively.
The poset of boundary strata of $\CP$ is \word{thin}---that is, every interval of length two has exactly four elements---and $\CP_{\geq 0}$ contains exactly one connected component of every stratum. Hence, the poset of cells in the boundary of $\CP_{\geq 0}$ is also thin, a fact which is helpful for establishing the regularity property.
On the other hand, the poset of boundary strata of $Y_V$ is \emph{not} thin.
However, $\mathcal Y_V$ fails to meet all strata of $Y_V$, and surprisingly, its cell poset  \emph{is} thin. As in the Lie-theoretic setting, this fact helps us to establish regularity.

Geometrically, the Richardson strata of matroid Schubert varieties are simpler than those of flag varieties.
In the matroid Schubert case, each Richardson stratum is a hyperplane arrangement complement.
Every connected component of a real hyperplane arrangement complement is homeomorphic to an open ball.
However, a real open Richardson variety in a flag variety may have connected components with nontrivial topology (see, e.g. \cite{MR00}).
The relative simplicity of the matroid case's geometry allows us to show that $\mathcal Y_V$ is a ball by directly exhibiting it as a cone over a closed ball in its boundary, bypassing such high-powered tools as the Poincar\'e conjecture, which underpins the known proofs of \cref{schubertthm}'s Lie-theoretic analogues. 

\begin{zb}
    Let $V \subset \R^5$ be the linear subspace cut out by
    \begin{gather*}
      x_1+x_2 - x_3 = x_3 - x_4 - x_5  = 0.
    \end{gather*}
    The poset of flats of $V$ (below left) is \emph{not} thin, so its interval poset, which indexes strata of $Y_V$, is also \emph{not} thin.
    On the other hand, the subposet of acyclic flats (below right) \emph{is} thin, so its interval poset, which indexes cells of $\mathcal Y_V$ is also thin.
\begin{center}
\begin{minipage}{0.4\linewidth}
\begin{tikzpicture}[scale=0.6,
        every node/.style={font=\footnotesize, circle, draw=blue, minimum size=17pt, inner sep=1pt}]
\node (1) at(5,8){$E$};
\node (2) at(0,6){$123$};
\node (3) at(2,6){$14$};
\node (4) at(4,6){$24$};
\node (5) at(6,6){$25$};
\node (6) at(8,6){$15$};
\node (7) at(10,6){$345$};
\node (8) at(1,4){$1$};
\node (9) at(3,4){$2$};
\node (10) at(5,4){$3$};
\node (11) at(7,4){$4$};
\node (12) at(9,4){$5$};
\node (13) at(5,2){$\emptyset$};

\draw[-] (1)--(2);
\draw[-] (1)--(3);
\draw[-] (1)--(4);
\draw[-] (1)--(5);
\draw[-] (1)--(6);
\draw[-] (1)--(7);
\draw[-] (2)--(8);
\draw[-] (2)--(9);
\draw[-] (2)--(10);
\draw[-] (3)--(8);
\draw[-] (3)--(11);
\draw[-] (4)--(9);
\draw[-] (4)--(11);
\draw[-] (5)--(8);
\draw[-] (5)--(12);
\draw[-] (6)--(9);
\draw[-] (6)--(12);
\draw[-] (7)--(10);
\draw[-] (7)--(11);
\draw[-] (7)--(12);
\draw[-] (8)--(13);
\draw[-] (9)--(13);
\draw[-] (10)--(13);
\draw[-] (11)--(13);
\draw[-] (12)--(13);
\end{tikzpicture}
\end{minipage}
\hspace{3em}
\begin{minipage}{0.3\linewidth}
\begin{tikzpicture}[scale=0.6,
        every node/.style={font=\footnotesize, circle, draw=blue, minimum size=17pt, inner sep=1pt}]
\node (1) at(3,8){$E$};
\node (3) at(0,6){$14$};
\node (4) at(2,6){$24$};
\node (5) at(4,6){$15$};
\node (6) at(6,6){$25$};
\node (8) at(0,4){$1$};
\node (9) at(2,4){$2$};
\node (11) at(4,4){$4$};
\node (12) at(6,4){$5$};
\node (13) at(3,2){$\emptyset$};

\draw[-] (1)--(3);
\draw[-] (1)--(4);
\draw[-] (1)--(5);
\draw[-] (1)--(6);
\draw[-] (3)--(8);
\draw[-] (3)--(11);
\draw[-] (4)--(9);
\draw[-] (4)--(11);
\draw[-] (5)--(8);
\draw[-] (5)--(12);
\draw[-] (6)--(9);
\draw[-] (6)--(12);
\draw[-] (8)--(13);
\draw[-] (9)--(13);
\draw[-] (11)--(13);
\draw[-] (12)--(13);
\end{tikzpicture}
\end{minipage}
\end{center}

\smallskip
The non-negative matroid Schubert variety $\mathcal Y_V$ (below) is homeomorphic to a closed 3-ball. Cells of $\mathcal Y_V$ are indexed by intervals in the poset of acyclic flats, ordered by inclusion. Hence, the cells structure of $\mathcal Y_V$ has ten 0-cells and sixteen 1-cells (labelled), along with eight 2-cells and one 3-cell (unlabelled).
One sees immediately that the closure of any cell is homeomorphic to a closed ball, so the cell structure is regular.

\smallskip
\begin{center}
  \includegraphics[width=2.6in]{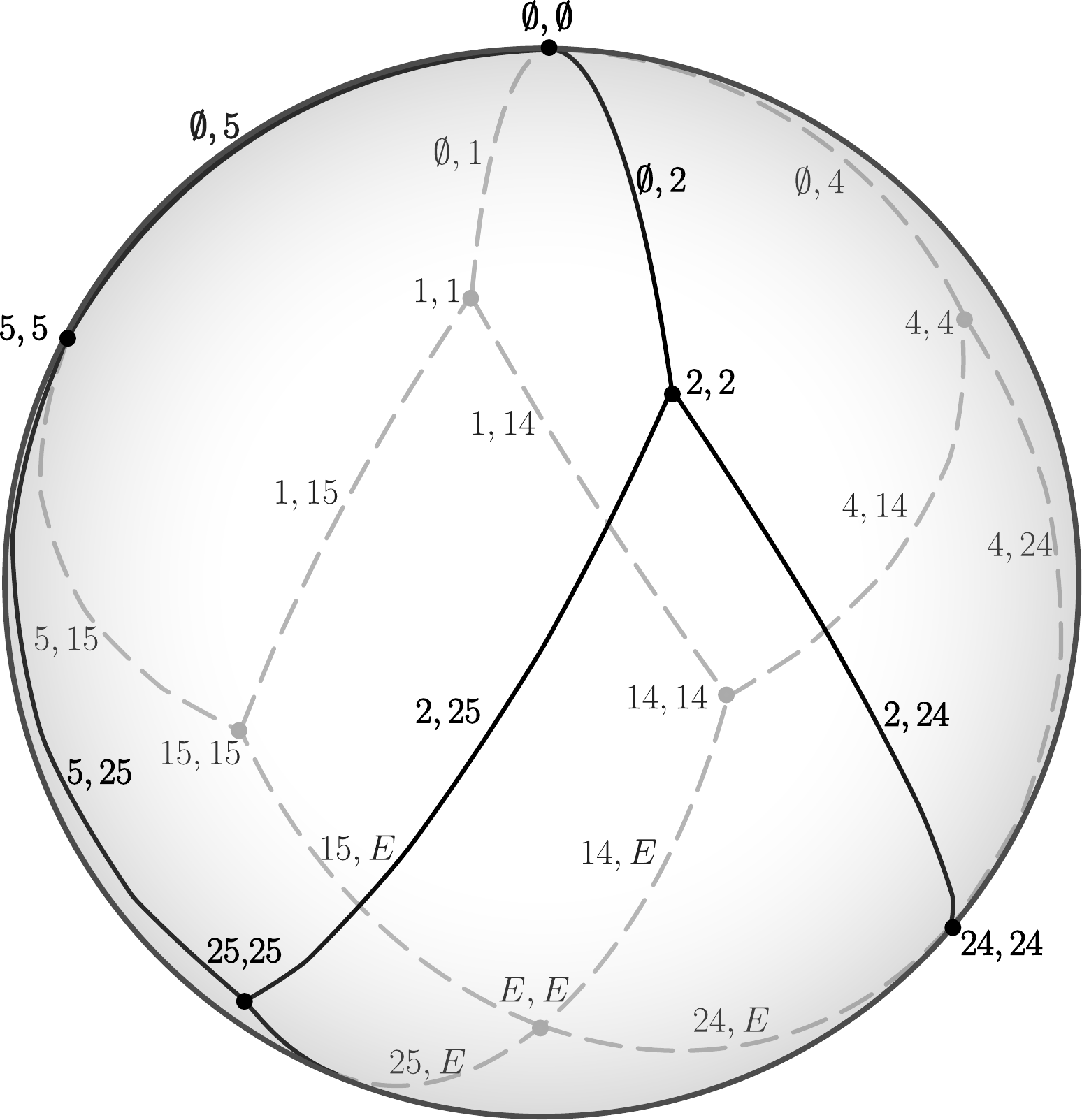}
\end{center}
\end{zb}

\subsection*{Acknowledgements}
XH is partially supported by the New Cornerstone Foundation through the New Cornerstone Investigator Program and the Xplorer prize, and by Hong Kong RGC Grant 14300023. 
CS thanks: Xuhua for his support \& hospitality during the visits that enabled this collaboration, Botong Wang for helpful conversation, and both CUHK and BICMR for pleasant working environments.

\section{Matroids and oriented matroids}
We review aspects of (oriented) matroid theory, comprehensively explored in \cite{W86} and \cite{bvswz99}, and state the main properties of matroid Schubert varieties.

\subsection{}
We may omit braces when writing one-element sets, e.g. ``$\{1,2\} \cup i$'' means ``$\{1,2\} \cup \{i\}$'' and ``$\{1,2\} \times 0$'' means ``$\{1,2\} \times \{0\}$''.
If $E$ and $K$ are sets, with $E$ finite, then $K^E := \prod_{i \in E} K$.
If $F \subset E$, then $\pi_F: K^E \to K^F$ is the projection.
If all factors in a product are single-element sets, then we may omit notation for the product, e.g. ``$\{0\} \times \{1\} \times \{1\}$'' will be written $0^{\{1\}}1^{\{2,3\}}$, and $0^E$ represents the origin of $\R^E$.
Both conventions on singletons will be violated as necessary to avoid confusion.

\smallskip
\noindent Throughout this paper, $E$ will denote a finite set.
\smallskip

In addition to sets, we will need to work with \word{signed sets}; that is, elements of $\{-,0,+\}^E$.
If $X$ is a signed set, write $X^-$, $X^0$, and $X^+$ for the coordinates of $X$ that have value $-$, $0$, and $+$, respectively.
We define $-X$ to be the signed set with $(-X)^- := X^+$, $(-X)^0 := X^0$, and $(-X)^+ := X^-$.
If $X$ and $Y$ are signed sets, then their \word{composition} is given by
\[
  (X\circ Y)_i := \begin{cases} X_i, & \text{if $X_i \neq 0$} \\ Y_i, & \text{otherwise.}\end{cases}
\]
Say  $X$ is \word{contained in}  $Y$, and write $X \leq Y$, if $X^+ \subset Y^+$ and $X^- \subset Y^-$.

For most terminology on posets, we refer to \cite{EC1}.
The \word{opposite} of a poset $P$ is $P^{op}$, the poset on the same underlying set as $P$, but with order reversed.

\subsection{}\label{bg:matroids}
A \word{matroid} on a finite set $E$ is defined by a collection of \word{flats} $F \subset E$ such that (i) $E$ is a flat, (ii) the intersection of two flats is a flat, and (iii) if $F$ is a flat and $i \in E \setminus F$, then there is a unique smallest flat containing $F \cup i$. The flats of a linear subspace, defined in \cref{intro:matroids}, satisfy these properties, giving us a recipe for producing a matroid from a linear subspace.

When ordered by inclusion, the flats of a matroid $\underline M$ form a graded lattice.
The \word{rank} of $\underline M$, denoted $\rk(\underline M)$, is the length of any maximal chain in this poset.
More generally, the \word{rank} of a flat $F$ of $\underline M$ is the length of any maximal chain of flats contained in $F$, and is denoted $\rk(F)$.
The \word{loops} of $\underline M$ are the elements of the minimal flat of $\underline M$. Call $\underline M$ \word{loopless} if its minimum flat is empty.

If $F \subset E$ is a flat of $\underline M$, then we can form the \word{restriction}  $\underline M|_F$ and \word{contraction} $\underline M/F$, matroids on $F$ and $E$, respectively, with flats
\[
  \{ G \subset F : \text{$G$ is a flat of $\underline M$}\}
  \quad \text{and}\quad
  \{ G \supset F : \text{$G \supset F$ is a flat of $\underline M$}\}.
\]

\begin{rmk}[Matroids and linear algebra]
  If $\K$ is a field and $V \subset \K^E$ is a linear subspace, then the flats of $V$ defined in \cref{intro:matroids} are the flats of a matroid $\underline M$. The rank of $\underline M$ is $\dim V$.
  Any matroid that arises in this manner is \word{realizable}, and $V$ is its \word{realization}.

  Let $\pi_F: \K^E \to \K^F$ be the coordinate projection.
  The restriction of $\underline M$ to $F$ is realized by $\pi_F(V) \subset \K^F$, and the contraction $\underline M/F$ is realized by $V \cap \ker(\pi_F)$.
  The element $i \in E$ is a loop of $\underline M$ if and only if $\pi_i(V) = \{0\}$.
  \end{rmk}

\medskip

Let $\K$ be a field and $V \subset \K^E$.
Recall (from \cref{intro:matroids}) that the \word{matroid Schubert variety} $Y_V$ associated to a linear subspace $V \subset \K^E$ is the Zariski closure of $V$ in $(\K \cup \infty)^E = (\P_\K^1)^E$.
For each pair of flats $F \subset G$ of $V$, let $Y_{FG}^\circ := Y_V \cap (0^F \times (\K_{\neq 0})^{G \setminus F} \times \infty^{E \setminus G})$, and let $Y_{FG}$ be the Zariski closure of $Y_{FG}^\circ$.

\begin{thm}\cite[Section 7]{PXY18}\label{schubertfacts}
  Let $\K$ be a field.
  Let $V \subset \K^E$ be a linear subspace, with associated matroid $\underline M$.
  \begin{enumerate}
  \item The intersection $Y_V \cap (\K^F \times \infty^{E \setminus F})$ is nonempty if and only if $F$ is a flat, in which case the intersection is equal to $\pi_F(V) \times \infty^{E \setminus F}$.\label{zstrata}
 \item If $F$ is a flat,  then $Y_V \cap \big((\P^1_\K)^F \times \infty^{E \setminus F}\big) = Y_{\pi_F(V)} \times \infty^{E \setminus F}$. \label{ziso}
 \item If $F$ is a flat, then $Y_V \cap \big( 0^F \times (\P^1_\K)^{E \setminus F}\big) = Y_{V \cap \ker(\pi_F)}$.\label{zcontract}
 \item $Y_{FG}$ is the disjoint union of all $Y_{F'G'}^\circ$ with $F \subset F' \subset G' \subset G$. \label{zclosurerel}
\end{enumerate}
\end{thm}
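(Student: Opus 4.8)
The plan is to prove part (i) --- the only substantive assertion --- and then deduce parts (ii)--(iv) from it by bookkeeping with the resulting stratification of $Y_V$, using that $\pi_F(V)$ realizes $\underline M|_F$, that $V\cap\ker\pi_F$ realizes $\underline M/F$, and that the Zariski closure of a product is the product of the closures.

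\textbf{Part (i).} Since the claim concerns subvarieties of $(\P^1_\K)^E$, I would first base change to $\overline{\K}$ and assume $\K$ algebraically closed. Fix $p\in Y_V$ and set $S:=\{i\in E:p_i\neq\infty\}$; the goal is to show $S$ is a flat with $\pi_S(p)\in\pi_S(V)$, and conversely that $\pi_S(V)\times\infty^{E\setminus S}\subseteq Y_V$ whenever $S$ is a flat. Since $V$ is a dense open subvariety of the irreducible $Y_V$, I can take a one-parameter degeneration to $p$: a $\K[[t]]$-point of $Y_V$ with generic point in $V$ and special point $p$, obtained by passing to a curve in $Y_V$ through $p$ that meets $V$ and normalizing. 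This is an element $\gamma=\sum_k t^kc_k\in V\otimes_\K\K((t))$ (with $c_k\in V$) such that $\gamma_i\in\K[[t]]$ with $\gamma_i(0)=p_i$ for $i\in S$, while $\gamma_i$ has a pole for $i\notin S$. The first condition says exactly that $c_k|_S=0$, i.e. $c_k\in V\cap\ker\pi_S$, for every $k<0$. The key input is then the linear-algebraic description of matroid closure: $V\cap\ker\pi_S$ is supported precisely on $E\setminus\overline S$, where $\overline S$ is the smallest flat containing $S$, because for $i\notin S$ some vector of $V$ vanishing on $S$ is nonzero at $i$ if and only if $i\notin\overline S$. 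Hence $(c_k)_i=0$ for all $k<0$ and all $i\in\overline S\setminus S$, so $\gamma_i\in\K[[t]]$ and $p_i=\gamma_i(0)\neq\infty$ for such $i$, contradicting $i\notin S$ unless $\overline S\setminus S=\emptyset$. Thus $S=\overline S$ is a flat, and $\pi_S(\gamma)=\sum_{k\ge0}t^k\pi_S(c_k)\in\pi_S(V)\otimes\K[[t]]$ gives $\pi_S(p)=\pi_S(c_0)\in\pi_S(V)$. For the converse, when $S$ is a flat I would pick $v_0\in V$ with zero set exactly $S$ and any $v_1\in V$: the line $t\mapsto v_1+tv_0$ lies in $V\subseteq Y_V$ and, viewed as a map $\P^1\to(\P^1)^E$, sends $\infty$ to $(\pi_S(v_1),\infty^{E\setminus S})$, so $\pi_S(V)\times\infty^{E\setminus S}\subseteq Y_V$ --- in particular it is nonempty.

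\textbf{Parts (ii)--(iv).} Part (i) gives $Y_V=\bigsqcup_{F\text{ flat}}(\pi_F(V)\times\infty^{E\setminus F})$, with $\dim\pi_F(V)=\rk(F)$. A stratum $\pi_G(V)\times\infty^{E\setminus G}$ is contained in $(\P^1)^F\times\infty^{E\setminus F}$ exactly when $G\subseteq F$, and meets $0^F\times(\P^1)^{E\setminus F}$ exactly when $F\subseteq G$ (then along $\pi_G(V\cap\ker\pi_F)\times\infty^{E\setminus G}$). Intersecting the stratification with these two closed subsets and matching strata --- recalling that the flats of $\underline M|_F$ are the flats of $\underline M$ contained in $F$, and those of $\underline M/F$ are the $G\setminus F$ for flats $G\supseteq F$ --- exhibits the two sides of (ii) and of (iii) as the same union of strata of $(\P^1)^E$, proving both. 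For (iv): a point of $Y_{FG}^\circ$ is finite exactly on $G$, so by (i) $G$ is a flat and then $F$ must be a flat of $\underline M|_G$, hence of $\underline M$; and when $F\subseteq G$ are flats, (i) identifies $Y_{FG}^\circ$ with $U\cap(\K^*)^{G\setminus F}$ for the subspace $U:=\pi_G(V)\cap\ker\pi_F\subseteq\K^{G\setminus F}$, which is loopless precisely because $F$ is a flat, so $Y_{FG}^\circ$ is a nonempty hyperplane-arrangement complement. Applying (ii) with the flat $G$ and then (iii) with the flat $F$ identifies $Y_{FG}$, the Zariski closure of $Y_{FG}^\circ$, with the matroid Schubert variety $Y_U$ placed inside $0^F\times(\P^1)^{G\setminus F}\times\infty^{E\setminus G}$, using that $U\cap(\K^*)^{G\setminus F}$ is dense in $U$; finally (i) applied to $U$ decomposes $Y_U$ into exactly the strata $Y_{F'G'}^\circ$ with $F\subseteq F'\subseteq G'\subseteq G$, which is the assertion.

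\textbf{What I expect to be the main obstacle.} The forward inclusions --- that a point of $Y_V$ in a prescribed coordinate locus has the claimed form --- should be soft, obtained by truncating or projecting the degeneration $\gamma$ and needing no hypothesis. The real work will be the reverse inclusions, i.e. producing degenerations that realize every expected limit point; these genuinely use the flatness hypotheses, since the construction $v_1+tv_0$ in (i) requires a vector of $V$ supported exactly off $S$ (which exists iff $S$ is a flat), and the reduction in (iv) requires $U$ loopless (again equivalent to $F$ being a flat). A secondary, purely technical, hurdle is that over an arbitrary field ``limits'' must be phrased via $\K[[t]]$-points, and one should base change to $\overline{\K}$ for the set-theoretic identities; alternatively one could check membership against the explicit ideal generators of $Y_V$ due to Ardila--Boocher, but the degeneration argument is shorter.
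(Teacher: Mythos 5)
The paper does not prove \cref{schubertfacts}; it cites \cite[Section 7]{PXY18}, which in turn builds on the defining-equations analysis of $Y_V$ due to Ardila--Boocher. So there is no in-paper proof to compare against. Your argument is correct and self-contained, and it takes a genuinely different route from the cited sources: rather than computing the ideal of $Y_V$ (the circuit/Gr\"obner approach of \cite{AB16}), you prove \cref{zstrata} directly by a valuative argument --- a $\K[[t]]$-point of $Y_V$ generically in $V$ --- and then derive \cref{ziso,zcontract,zclosurerel} by matching strata, using that $\pi_F(V)$ and $V\cap\ker\pi_F$ realize $\underline M|_F$ and $\underline M/F$. This is clean and avoids any explicit ideal computation. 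Two small remarks. First, the base change to $\overline\K$ is doing double duty: besides giving you curves and closed-point degenerations, it also reconciles the paper's definition of ``flat'' (existence of a vector in $V$ with prescribed zero set) with the matroid-theoretic one, since these agree over infinite fields but can diverge over small finite fields; it is worth being explicit that the statement should be read over $\overline\K$ (or for $\K$ infinite), which is all the paper uses. Second, in the forward direction of \cref{zstrata} your key observation --- that $c_k\in V\cap\ker\pi_S$ for $k<0$ forces $(c_k)_i=0$ for $i\in\overline S$ and hence $\overline S\subseteq S$ --- is exactly right, and the converse via the pencil $t\mapsto v_1+tv_0$ is the standard way to realize the limit; no gap there.
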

If $L$ is the set of loops of $V$'s matroid, then $Y_V = 0^L \times Y_{\pi_{E \setminus L}(V)}$, so we lose little by assuming the matroid of $V$ is loopless.

\subsection{}\label{bg:oriented}
An \word{oriented matroid} $M$ on a finite set $E$ is the data of a collection of \word{covectors} $\mathcal C \subset \{-,0,+\}^E$ such that
\begin{enumerate}
\item $0^E \in \mathcal C$,
\item $\mathcal C$ is closed under composition and negation
\item If $X,Y \in \mathcal C$ and $X(i) = -Y(i) \neq 0$, then there exists $Z \in \mathcal C$ such that $Z(i) = 0$ and $Z(j) = (X \circ Y)_j =(Y \circ X)_j$ for all $j$ such that $X_j = Y_j$.
\end{enumerate}
The above axioms imply the collection $\{ X^0 : X \in \mathcal C\}$ is the flats of a matroid $\underline M$, the \word{underlying matroid} of $M$.
\word{Flats} and \word{loops} of an oriented matroid are those of its underlying matroid.

Ordering $\{-,0,+\}$ by $0 < -$ and $0 < +$,  we induce a partial order on $\mathcal C$.
The poset $\mathcal C \cup \{\hat 1\}$, formed by adjoining a maximum to $\mathcal C$, is a graded lattice.
Maximal covectors are called \word{topes}.

Fix $A \subset E$.
By negating in each covector the coordinates indexed by $A$, we obtain a new subset $\mathcal C' \subset \{-,0,+\}^E$. In fact, $\mathcal C'$ the covectors of an oriented matroid $M'$, called a \word{reorientation} of $M$. Evidently, $\mathcal C$ and $\mathcal C'$ are isomorphic as posets, and the underlying matroids of $M$ and $M'$ are equal.

Given $F \subset E$ a flat of an oriented matroid $M$ on $E$, the \word{restriction} of $M$ to $F$ and \word{contraction} of $M$ by $F$ are the  oriented matroids $M|_F$ and $M/F$ defined by
\[
  \mathcal C(M|_F) := \{ \pi_F(X) : X \in \mathcal C(M)\} \quad\text{ and }\quad
  \mathcal C(M/F) := \{ X : X \in \mathcal C(M), \; F \subset X^0 \}.
\]
From this description, one sees that the topes of $M/F$ are the covectors $X$ with $X^0 = F$.
The underlying matroids of $M|_F$ and $M/F$ are $\underline M|_F$ and $\underline M/F$, respectively.

\begin{rmk}[Oriented matroids and linear algebra]\label{omgeo}
  The \word{sign map} is $s: \R^E \to \{-,0,+\}^E$ defined by
  \[ s(v)_i = \begin{cases} -, & \pi_i(v)  < 0,\\
                   0, & \pi_i(v) = 0, \\
                   +, & \pi_i(v) > 0.
              \end{cases}
 \]
The sign map explains composition: if $v,w \in \R^E$, then $s(v) \circ s(w) = s(v + \epsilon w)$ for small $\epsilon > 0$.
If $V \subset \R^E$ is a linear subspace, then $\{s(v) : v \in V\}$ is the covectors of an oriented matroid. An oriented matroid $M$ that arises in this way is called \word{realizable}, and $V$ its \word{realization}.
Reorientations of $M$ are obtained by negating some of the coordinate functions on $\R^E$.

By intersecting the coordinate hyperplanes of $\R^E$ with $V$, we obtain a hyperplane arrangement in $V$.
The topes of $M$ correspond to the connected components of the arrangement complement.
More generally, each region of the arrangement is the preimage under $s$ of a covector of $M$.
The poset of the regions' closures, ordered by containment, is isomorphic to the poset of covectors of $M$.
\end{rmk}

\subsection{}
An \word{acyclic flat}\footnote{Acyclic flats may be called ``positive flats'' elsewhere in the literature, e.g. \cite{ARW06, AKW04}.} of an oriented matroid $M$ is a flat $F$ of $M$ such that $0^F+^{E \setminus F}$ is a covector of $M$.
When ordered by containment, the acyclic flats form a lattice $\mathcal L$, called the \word{Las Vergnas face lattice} of $M$.
\begin{prop}\label{acyclic_intervals}
  Let $F$ be a flat of an oriented matroid $M$.
  \begin{enumerate}
  \item $H \supset F$ is an acyclic flat of $M / F$ if and only if $H$ is an acyclic flat of $M$.\label{acyclic_contraction}
  \item If $F$ is an acyclic flat, then $G \subset F$ is an acyclic flat of $M|_F$ if and only if $G$ is an acyclic flat of $M$.\label{acyclic_restriction}
  \end{enumerate}
\end{prop}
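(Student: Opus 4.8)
The plan is to unwind the definitions of restriction, contraction, and acyclic flat, with the only nontrivial input being a single composition of covectors. For the first part: the flats of $M/F$ are precisely the flats of $M$ containing $F$, and $\mathcal C(M/F)$ consists of those $X \in \mathcal C(M)$ with $F \subset X^0$. Given $H \supset F$, the signed set $0^H +^{E \setminus H}$ has zero set exactly $H$, which contains $F$, so it belongs to $\mathcal C(M/F)$ if and only if it belongs to $\mathcal C(M)$; since $H$ is a flat of $M/F$ precisely when it is a flat of $M$ containing $F$, this is exactly the asserted equivalence. The \emph{if} direction of the second part is equally immediate: if $G$ is an acyclic flat of $M$ then $0^G +^{E \setminus G} \in \mathcal C(M)$, and applying $\pi_F$ (using $G \subset F$) gives $0^G +^{F \setminus G} = \pi_F(0^G +^{E \setminus G}) \in \mathcal C(M|_F)$, so $G$, which is a flat of $M|_F$ since it is a flat of $M$ contained in $F$, is acyclic in $M|_F$. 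Neither of these arguments uses an acyclicity hypothesis.

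For the \emph{only if} direction of the second part, suppose $G$ is an acyclic flat of $M|_F$. Then there is $X \in \mathcal C(M)$ with $\pi_F(X) = 0^G +^{F \setminus G}$, i.e.\ $X^0 \cap F = G$, $X^+ \supset F \setminus G$, and $X^- \cap F = \emptyset$. Because $F$ is an acyclic flat of $M$, the signed set $Y := 0^F +^{E \setminus F}$ is a covector of $M$, and hence so is the composition $Y \circ X$. On coordinates in $F$ we have $Y_i = 0$, so $(Y \circ X)_i = X_i$, which is $0$ on $G$ and $+$ on $F \setminus G$; on coordinates in $E \setminus F$ we have $Y_i = + \neq 0$, so $(Y \circ X)_i = +$. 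Therefore $Y \circ X = 0^G +^{E \setminus G}$, which is a covector of $M$, so $G$ (a flat, being the zero set of a covector) is an acyclic flat of $M$.

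I expect no serious obstacle here; the one place that rewards a little care is composing on the correct side in the last paragraph. The covector $X$ may carry negative coordinates outside $F$, so $X \circ Y$ need not have the desired sign vector, whereas placing $Y$ on the left overwrites precisely those coordinates with $+$ while leaving the behavior on $F$ untouched.
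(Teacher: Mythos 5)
Your proof is correct and follows the same route as the paper: part (i) and the forward direction of (ii) drop straight out of the definitions, and for the converse of (ii) you compose the covector $0^F +^{E\setminus F}$ on the \emph{left} with a covector realizing $G$'s acyclicity in $M|_F$, which is exactly the composition the paper uses (with the names $X$ and $Y$ interchanged). Your closing remark about composing on the correct side is well taken and pinpoints the only step where one could go astray.
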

\begin{proof}\hfill
  \begin{enumerate}
  \item If $H \supset F$, then $0^H+^{E \setminus H}$ is a covector of $M$ if and only if it is a covector of $M/F$.
  \item Suppose $F$ is an acyclic flat.
    If $G \subset F$ is an acyclic flat of $M$, then $0^G +^{E \setminus G}$ is a covector of $M$, so $0^G +^{F \setminus G}$ is an acyclic flat of $M|_F$.
    
    Conversely, suppose $G$ is an acyclic flat of $M|_F$; in other words, there is a covector $Y$ of $M$ such that $Y^0 \supset G$ and $Y^+ \supset F \setminus G$.
    Since $F$ is an acyclic flat, there is also a covector $X$ of $M$ with $X^0 = F$ and $X^+ = E \setminus F$.
    Their composition satisfies $(X \circ Y)^0 = F \cap G = G$ and 
    \[
       (X\circ Y)^+ = X^+ \cup (Y^+ \setminus X^-) \supset (E \setminus F) \cup ( (F \setminus G) \setminus \emptyset) = E \setminus G,
    \]
    so $G$ is an acyclic flat of $M$.\qedhere
  \end{enumerate}
\end{proof}

When $M$ is realized by $V \subset \R^E$, one can check the acyclicity of a flat $F$ using the equations of $V$.
\begin{prop}\label{poseqn}
  Let $M$ be the oriented matroid of a linear subspace $V \subset \R^E$.
  A flat $F$ is acyclic if and only if there is no linear function $f = \sum_i \alpha_i x_i$ that vanishes on $V$,
  satisfies $\alpha_i \geq 0$ for all $i \in E \setminus F$, and
  has $\alpha_i > 0$ for at least one $i \in E \setminus F$.
\end{prop}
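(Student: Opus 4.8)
The plan is to reduce the statement to a theorem of the alternative (a form of Stiemke's lemma), which I would deduce directly from the separating hyperplane theorem rather than cite.

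First I would restate acyclicity geometrically. Unwinding the definition and the description of covectors via the sign map (\cref{omgeo}), the flat $F$ is acyclic exactly when there is $v \in V$ with $v_i = 0$ for $i \in F$ and $v_i > 0$ for $i \in E \setminus F$; that is, when the linear subspace $W := V \cap \ker \pi_F$ meets the cone
\[
  S := \{\, x \in \R^E : x_i > 0 \text{ for all } i \in E \setminus F \,\}.
\]
The key formal point is that $S$ is open in all of $\R^E$ (nothing is imposed on the coordinates indexed by $F$), so I am separating a subspace from an \emph{open} convex set.

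One direction is a one-line pairing argument: if $f = \sum_i \alpha_i x_i$ vanishes on $V$, has all $\alpha_i \geq 0$ for $i \in E \setminus F$, and has $\alpha_{i_0} > 0$ for some $i_0 \in E \setminus F$, then any $v \in W \cap S$ would satisfy $0 = f(v) = \sum_{i \in E \setminus F} \alpha_i v_i > 0$, so $W \cap S = \emptyset$ and $F$ is not acyclic. The converse is the substantive part. Assuming $W \cap S = \emptyset$, the separating hyperplane theorem yields a nonzero $\gamma \in \R^E$ with $\gamma \cdot w \leq c \leq \gamma \cdot x$ for all $w \in W$ and $x \in S$. Since $W$ is a subspace, $\gamma \cdot w \equiv 0$ on $W$ and $c \geq 0$; then I would extract the sign pattern of $\gamma$ from $\gamma \cdot x \geq 0$ on $S$ — letting the $F$-coordinates of test points range over all of $\R$ forces $\gamma_j = 0$ for $j \in F$, and letting the remaining coordinates tend to a standard basis vector forces $\gamma_i \geq 0$ for $i \in E \setminus F$. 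Thus $\gamma$ is a nonzero nonnegative vector supported on $E \setminus F$, orthogonal to $W = V \cap \ker\pi_F$, hence lying in $(V \cap \ker\pi_F)^\perp = V^\perp + \ker \pi_{E \setminus F}$ (the second summand being the vectors supported on $F$). Writing $\gamma = \alpha + \delta$ with $\alpha \in V^\perp$ and $\delta \in \ker\pi_{E\setminus F}$, the form $f := \sum_i \alpha_i x_i$ vanishes on $V$ and, since $\alpha_i = \gamma_i$ for $i \in E \setminus F$, has exactly the required sign pattern.

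I do not expect a serious obstacle here: the content is the separating hyperplane theorem. The only spots needing care are reading off the correct sign constraints on $\gamma$ (zero on the $F$-coordinates, nonnegative elsewhere) from the separating inequality, and invoking the identity $(A \cap B)^\perp = A^\perp + B^\perp$ to turn $\gamma$ back into a linear form vanishing on $V$.
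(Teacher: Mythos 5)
Your proof is correct, and it takes a genuinely different route from the paper's. For the easy direction the two arguments coincide: an $f$ with the stated sign pattern is strictly positive on $0^F \times \R_{>0}^{E\setminus F}$, so $F$ cannot be acyclic. For the converse the paper simply cites Proposition~3.4.8 of Bj\"orner et al.\ (a Farkas-type ``complementarity'' statement internal to oriented matroid theory) applied to the contraction $M/F$; you instead prove the realizable case from scratch via the separating hyperplane theorem. Your reformulation --- replace the non-open cone $0^F \times \R_{>0}^{E\setminus F}$ by the open cone $S$ imposing no constraint on the $F$-coordinates, and replace $V$ by $W = V \cap \ker\pi_F$ --- is exactly the right move: it makes one of the two disjoint convex sets open, so separation applies cleanly, and it lets you read off $\gamma_j = 0$ for $j \in F$ and $\gamma_i \geq 0$ for $i \in E\setminus F$ from the separating inequality. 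The final step, writing $\gamma \in W^\perp = (V \cap \ker\pi_F)^\perp = V^\perp + \ker\pi_{E\setminus F}$ and peeling off the $V^\perp$ part $\alpha$, which agrees with $\gamma$ on $E\setminus F$, gives precisely the required linear form. The trade-off is the usual one: the paper's citation is shorter and the cited result holds for arbitrary (not necessarily realizable) oriented matroids, but the proposition here is stated only for realizable $M$, so your elementary, self-contained argument covers exactly what is needed and makes the proof legible without a detour into the oriented matroid literature.
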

\begin{proof}
  If such an $f$ exists, then $V \cap (0^F \times \R_{>0}^{E \setminus F}) = \emptyset$ because $f$ is strictly positive on $0^F \times \R_{>0}^{E \setminus F}$.
  The converse holds by \cite[Proposition 3.4.8(i) \& (ii)]{bvswz99}, applied to $M/F$.
\end{proof}
\begin{zb}
  \cref{acyclic_intervals}\cref{acyclic_restriction} can fail if $F$ is not an acyclic flat.
  Let $E = \{1,2,3,4\}$, $V \subset \R^E$ be defined by $x_1 - x_2 - x_3 - x_4 = 0$, and $M$ the associated oriented matroid.
  The flats of $M$ are $E$, and all subsets of $E$ of size $\leq 2$.
  In particular, $F = \{1,2\}$ is a flat of $M$, but is not an acyclic flat because the system
  \begin{align*}
  &x_1 = x_2 = 0\\
  &x_1 - x_2 - x_3 - x_4 = 0
  \end{align*}
  has no solutions with $x_3, x_4 >0$.
  For similar reasons, $G = \{1\}$ is a flat, but not an acyclic flat of $M$.

  On the other hand, $G$ \textit{is} an acyclic flat of $M|_F$. This is because the point $(0,2,-1,-1)$, for example,  is in $V$, so $(0,+,-,-)$ is a covector of $M$, so $(0,+)$ is a covector of $M|_F$.
\end{zb}
\begin{rmk}
  If $V \subset \R^n$, then $V \cap \R_{\geq 0}^n$ is a polyhedral cone.
  The face lattice of $V \cap \R_{\geq 0}^n$ is known as the \word{Edmonds-Mandel lattice} of $M$, and the opposite poset is the Las Vergnas face lattice of $M$.
\end{rmk}

A graded poset is \word{thin} if all of its length-two intervals have exactly four elements.
\begin{thm}\cite[Theoerem 4.1.14]{bvswz99}\label{thm:thin}
  The poset of covectors is thin. In particular, the Las Vergnas face lattice is thin.
\end{thm}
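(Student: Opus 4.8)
The plan is to take the first assertion—thinness of the covector poset $\mathcal{C}$ of $M$—directly from \cite[Theorem 4.1.14]{bvswz99}, and to deduce thinness of the Las Vergnas face lattice $\mathcal{L}$ from it by realising $\mathcal{L}$ as the order-dual of an interval inside $\mathcal{C}$.

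First I would introduce the comparison map $\phi \colon \mathcal{L} \to \mathcal{C}$ sending an acyclic flat $F$ to the covector $0^F +^{E \setminus F}$, which is a covector precisely because $F$ is acyclic. Three things should then fall out routinely: $\phi$ is injective because $F$ is recovered from $\phi(F)$ as its zero set; the image of $\phi$ is exactly the set $\mathcal{C}^{\geq 0}$ of covectors with empty negative part, because for such a covector $Y$ the zero set $Y^0$ is a flat and $Y = 0^{Y^0} +^{E \setminus Y^0}$, which forces $Y^0$ to be acyclic; and $\phi$ reverses order, since for acyclic flats $F \subseteq F'$ is equivalent to $\phi(F')^+ = E \setminus F' \subseteq E \setminus F = \phi(F)^+$, both negative parts being empty. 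Thus $\phi$ is an isomorphism from $\mathcal{L}$ onto $(\mathcal{C}^{\geq 0})^{\mathrm{op}}$.

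Next I would show that $\mathcal{C}^{\geq 0}$ is an \emph{interval} of $\mathcal{C}$, not merely a sub-poset. Its minimum is $0^E$. For the maximum I would use that the acyclic flats form a lattice (as recalled above), so $\mathcal{L}$ has a minimum $F_0$ and hence $T := \phi(F_0) = 0^{F_0} +^{E \setminus F_0}$ is the maximum of $\mathcal{C}^{\geq 0}$. The crucial point is that $\mathcal{C}^{\geq 0}$ is then the \emph{whole} interval $[0^E, T]$: if $Y \in \mathcal{C}$ satisfies $0^E \leq Y \leq T$ then $Y^- \subseteq T^- = \emptyset$, so $Y \in \mathcal{C}^{\geq 0}$. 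Finally, an interval of a thin graded poset is again thin (any of its length-two intervals is a length-two interval of the ambient poset), and the opposite of a thin poset is thin; combining these with the isomorphism above gives that $\mathcal{L}$ is thin.

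The step I expect to need the most care is pinning down the maximum $T$ of $\mathcal{C}^{\geq 0}$: when $M$ is not acyclic there is no all-positive covector $+^E$ to play this role, and it is exactly the lattice property of $\mathcal{L}$ that supplies a substitute, so this is the only place the generality is felt and it is worth isolating as a one-line observation; everything else is bookkeeping. (Were one instead to reprove the cited first assertion from scratch, the real work would be to identify each length-two interval $[X,Z]$ of $\mathcal{C}$ with the covector poset of a rank-two minor of $M$, whose proper part is a cycle and hence has four elements; but here we simply invoke \cite{bvswz99}.)
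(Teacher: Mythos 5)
The paper gives no proof: \cref{thm:thin} is stated as an import from \cite{bvswz99}, and the phrase ``in particular'' is the only trace of the deduction you supply, so there is no authorial argument to compare against. Your argument for the second sentence is correct and fills the gap cleanly. The map $F \mapsto 0^F+^{E\setminus F}$ is an order-reversing bijection from $\mathcal L$ onto the set $\mathcal C^{\geq 0}$ of nonnegative covectors, and---this is the point you rightly isolate as the crux---$\mathcal C^{\geq 0}$ is an honest interval $[0^E, T]$ of $\mathcal C$ rather than an arbitrary subposet, with $T$ the positive covector of the minimal acyclic flat furnished by the lattice property of $\mathcal L$. Since intervals and order-duals of a thin graded poset are thin, $\mathcal L$ inherits thinness from $\mathcal C$. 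One small caution about your closing parenthetical: a length-two interval $[X,Z]$ of $\mathcal C$ is not isomorphic to the full covector poset of a rank-two minor (that poset has $4m+1$ elements when the minor has $m$ simple non-loop elements), but rather to a sub-interval of the form $[0,\text{tope}]$ inside such a poset, and a tope in a rank-two oriented matroid covers exactly two covectors. Since you only cite the first sentence rather than reprove it, nothing in your argument rests on this, but as written the parenthetical would mislead a reader attempting the from-scratch proof.
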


\section{Strata of $\mathcal Y_V$}
Let $V \subset \R^E$ be a linear subspace with oriented matroid $M$.
Let $L$ be the set of loops of $M$.
Recall from \cref{sec:mainresult}: the \word{non-negative matroid Schubert variety} $\mathcal Y_V$ is the analytic closure of $V \cap (0^L \times \R^{E \setminus L}_{>0})$ in $(\P_\R^1)^n$.
For each $F \subset G \subset E$, \;$\mathcal Y_{FG}^\circ := \mathcal Y_V \cap (0^F \times \R_{>0}^{G \setminus F} \times \infty^{E \setminus G})$, and $\mathcal Y_{FG} := \clan{\mathcal Y_{FG}^\circ}$.
\footnote{Careful readers will have noticed that in \cref{intro:matroids}, we defined $\mathcal Y_V$ as the closure of $V \cap \R_{\geq 0}^E$. The two definitions agree because $V \cap \R_{\geq 0}^E$ is in the closure of $V \cap (0^L \times \R_{>0}^{E \setminus L})$.}

In this section, we prove  Theorems \ref{schubertthm}\cref{main:strata,main:boundary}, which say that the subsets $\mathcal Y_{FG}^\circ$ indexed by acyclic flats form a stratification of $\mathcal Y_V$, and that the closure poset is isomorphic to the interval poset of the Las Vergnas face lattice of the oriented matroid of $V$.
\begin{lem}\label{Vbarpos}
  Let $V \subset \R^E$ be a linear subspace defining an oriented matroid $M$.
  If $F \subset G \subset E$ are flats of $M$, then
  \[ Y_V \cap (0^F \times \R_{>0}^{G\setminus F} \times \infty^{E \setminus G}) =  \big(\pi_G(V) \cap (0^F \times \R_{>0}^{G\setminus F})\big) \times \infty^{E \setminus G}. \]
  In particular, $Y_V \cap (0^F \times \R_{>0}^{G \setminus F} \times \infty^{E \setminus G})$ is nonempty if and only if $F$ is an acyclic flat of $M|_G$.
\end{lem}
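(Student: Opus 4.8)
The plan is to prove the displayed identity by double containment, using the basic facts about matroid Schubert varieties in \cref{schubertfacts}, and then derive the ``in particular'' statement from \cref{poseqn}. First I would apply \cref{schubertfacts}\cref{ziso} with the flat $G$: since $G$ is a flat of $M$, we have $Y_V \cap \big((\P^1_\R)^G \times \infty^{E \setminus G}\big) = Y_{\pi_G(V)} \times \infty^{E \setminus G}$. Intersecting both sides further with $0^F \times \R_{>0}^{G \setminus F} \times \infty^{E \setminus G}$ reduces the problem to showing
\[
  Y_{\pi_G(V)} \cap (0^F \times \R_{>0}^{G \setminus F}) = \pi_G(V) \cap (0^F \times \R_{>0}^{G \setminus F}),
\]
i.e. to the case $G = E$. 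So it suffices to show that $Y_V \cap (0^F \times \R_{>0}^{E \setminus F}) = V \cap (0^F \times \R_{>0}^{E \setminus F})$ whenever $F$ is a flat.

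For that reduced statement, the containment $\supseteq$ is immediate since $V \subset Y_V$. For $\subseteq$, I would use \cref{schubertfacts}\cref{zstrata}, which tells us that $Y_V \cap (\R^F \times \infty^{E \setminus F'})$ is nonempty only when $F'$ is a flat, and in that case equals $\pi_{F'}(V) \times \infty^{E \setminus F'}$. A point of $Y_V \cap (0^F \times \R_{>0}^{E \setminus F})$ has all coordinates finite and no coordinate equal to $\infty$, so it lies in $\R^E \cap Y_V$; but $Y_V$ contains $V$ as an open dense subset and $\R^E \cap Y_V$ is exactly $V$ (take $F' = \emptyset$ in \cref{zstrata}, or more simply note that the $\R^E$-points of the Zariski closure of $V$ in $(\P^1)^E$ are just $V$, as $V$ is already Zariski closed in $\R^E = \A^E$). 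Hence the point lies in $V$, and having prescribed zero set exactly on $F$ — well, here we only need it lies in $0^F \times \R_{>0}^{E\setminus F}$, which is automatic — so it lies in $V \cap (0^F \times \R_{>0}^{E \setminus F})$, as desired. Substituting back with $V$ replaced by $\pi_G(V)$ (whose matroid has $F$ as a flat by the restriction description in \cref{bg:matroids}, since $F \subset G$) gives the displayed identity.

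Finally, for the ``in particular'' clause: the right-hand side $\big(\pi_G(V) \cap (0^F \times \R_{>0}^{G\setminus F})\big) \times \infty^{E \setminus G}$ is nonempty if and only if $\pi_G(V) \cap (0^F \times \R_{>0}^{G \setminus F})$ is nonempty. Now $\pi_G(V)$ realizes the restriction $\underline M|_G$, and its oriented matroid is $M|_G$ (\cref{bg:oriented}); so by the definition of acyclic flat together with \cref{poseqn} applied to $M|_G$ and the flat $F$ of $M|_G$, this set is nonempty precisely when $F$ is an acyclic flat of $M|_G$. (Equivalently, $0^F +^{G \setminus F}$ being a covector of $M|_G$ is, via the sign map of \cref{omgeo}, exactly the statement that $\pi_G(V)$ meets the open orthant $0^F \times \R_{>0}^{G\setminus F}$.)

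I expect the only real subtlety to be the reduction step and the clean identification $\R^E \cap Y_V = V$; everything else is bookkeeping with the quoted results. One should double-check that applying \cref{schubertfacts}\cref{ziso} does not secretly require $\pi_G(V)$'s matroid to be loopless, and that $F$ remains a flat of the restricted subspace — both follow from the restriction description of flats in \cref{bg:matroids} and the remark that loops may be discarded harmlessly.
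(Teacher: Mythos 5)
Your argument is correct and is essentially the paper's proof spelled out in more detail: the paper also invokes \cref{schubertfacts}\cref{ziso} and the finite-points observation for the displayed equality (in fact \cref{schubertfacts}\cref{zstrata} applied with $G$ in place of $F$ gives it in one step), and reads off the nonemptiness criterion from the sign-map description of covectors of $M|_G$. Two small notes: the flat in your appeal to \cref{schubertfacts}\cref{zstrata} should be $F'=E$ rather than $F'=\emptyset$, and the citation of \cref{poseqn} in the final paragraph is superfluous --- your own parenthetical (acyclicity of $F$ in $M|_G$ means $0^F+^{G\setminus F}$ is a covector of $M|_G$, which under the sign map is exactly nonemptiness of $\pi_G(V)\cap(0^F\times\R_{>0}^{G\setminus F})$) is already the whole argument and is what the paper uses.
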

\begin{proof}
  The equality follows from \cref{schubertfacts}\cref{ziso}.
  For the statement on non-emptiness, recall (from \cref{bg:oriented}) that the oriented matroid of $\pi_G(V)$ is $M|_G$.
  Non-emptiness of $\pi_G(V) \cap (0^F \times \R_{>0}^{G \setminus F})$ is equivalent to $0^F \times +^{G \setminus F}$ being a covector of $M|_G$, in turn equivalent to acyclicity of $F$ in $M|_G$.
\end{proof}
\begin{lem}\label{infinity_avoidance}
  Let $V \subset \R^E$ be a linear subspace defining an oriented matroid $M$. Let $F \subset E$.
  If $F$ is not an acyclic flat, then $\mathcal Y_V \cap (\R_{\geq 0}^F \times \infty^{E \setminus F}) = \emptyset$.
  \end{lem}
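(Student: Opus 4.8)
The plan is to approximate any hypothetical point of $\mathcal Y_V$ lying in the slice $\R_{\geq 0}^F \times \infty^{E \setminus F}$ by genuine points of $V \cap \R_{\geq 0}^E$ and extract a contradiction from a positive linear relation on $V$. First, I would dispose of the case that $F$ is not a flat: since $\mathcal Y_V \subseteq Y_V$ and $\R_{\geq 0}^F \times \infty^{E \setminus F} \subseteq \R^F \times \infty^{E \setminus F} = \K^F \times \infty^{E \setminus F}$ (here $\K = \R$), \cref{schubertfacts}\cref{zstrata} gives $Y_V \cap (\K^F \times \infty^{E \setminus F}) = \emptyset$, so the intersection in question is already empty. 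Hence I may assume $F$ is a flat that is not acyclic.

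Suppose, for contradiction, that $p \in \mathcal Y_V \cap (\R_{\geq 0}^F \times \infty^{E \setminus F})$. As $(\P^1_\R)^E$ is a compact metric space and $\mathcal Y_V$ is the analytic closure of $V \cap \R_{\geq 0}^E$, there is a sequence $v_k \in V \cap \R_{\geq 0}^E$ with $v_k \to p$. Reading off coordinates: for $i \in F$, the sequence $\pi_i(v_k)$ consists of nonnegative reals converging to the finite limit $\pi_i(p)$, hence is bounded; for $i \in E \setminus F$, the sequence $\pi_i(v_k)$ consists of nonnegative reals converging to $\infty$ in $\P^1_\R$, hence $\pi_i(v_k) \to +\infty$.

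Finally, invoke \cref{poseqn}: because $F$ is a non-acyclic flat, there is a linear form $f = \sum_{i \in E}\alpha_i x_i$ vanishing on $V$ with $\alpha_i \geq 0$ for all $i \in E \setminus F$ and $\alpha_{i_0} > 0$ for some $i_0 \in E \setminus F$. Evaluating on $v_k$ and splitting the sum over $F$ and $E \setminus F$, the part $\sum_{i \in F}\alpha_i \pi_i(v_k)$ stays bounded while $\sum_{i \in E \setminus F}\alpha_i \pi_i(v_k)$ is a sum of nonnegative terms, one of which --- namely $\alpha_{i_0}\pi_{i_0}(v_k)$ --- tends to $+\infty$; thus $0 = f(v_k) \to +\infty$, a contradiction. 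I expect the main subtlety to be conceptual rather than computational: the argument crucially uses that $\mathcal Y_V$ is an \emph{analytic} closure --- the Zariski closure $Y_V$ genuinely does meet $\R^F \times \infty^{E \setminus F}$ when $F$ is a flat --- so the nonnegativity of the approximating points (which forces the $E \setminus F$-coordinates to escape to $+\infty$ rather than merely to $\infty$) is exactly what drives the contradiction, and one must pair it with the correctly signed linear form furnished by \cref{poseqn}.
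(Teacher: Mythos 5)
Your proof is correct and takes essentially the same approach as the paper's: dispose of the non-flat case via \cref{schubertfacts}\cref{zstrata}, then for a non-acyclic flat $F$ invoke \cref{poseqn} to produce a linear form $f$ vanishing on $V$ with nonnegative (and at least one positive) coefficients on $E \setminus F$, and observe that $f$ cannot vanish near the boundary point. The paper phrases this by exhibiting an explicit neighborhood of $w$ disjoint from $V \cap \R^E_{>0}$, while you phrase it sequentially using metrizability of $(\P^1_\R)^E$; the two formulations are interchangeable.
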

\begin{proof}
  If $F$ is not a flat, then $\mathcal Y_V \cap (\R_{\geq 0}^F \times \infty^{E \setminus F}) = \emptyset$ by \cref{schubertfacts}\cref{zstrata}. 
    Otherwise, suppose $F$ is a flat, but not an acyclic flat and let $w \in \R_{\geq 0}^F \times \infty^{E \setminus F}$.
    By \cref{poseqn}, there is a linear functional $f = \sum_i \alpha_i x_i$ that vanishes on $V$ and satisfies $\alpha_i \geq 0$ for all $i \not\in F$, with at least one such $\alpha_i$ nonzero.
        When  $N \gg 0$, $f$ does not vanish at any point of $\prod_{i \in F} (w_i - \frac1N, w_i + \frac1N) \times \prod_{j \in E \setminus F} (N,\infty)$. Hence, the neighborhood $\prod_{i \in F} (w_i - \frac1N,w_i+\frac1N) \times \prod_{j \in E \setminus F} (N,\infty]$ of $w$ does not intersect $V \cap \R^E_{>0}$, meaning that $w \not \in \mathcal Y_V$.
  \end{proof}
\begin{lem}\label{geq0iso}
  Let $V \subset \R^E$ be a linear space defining an oriented matroid $M$.
  If $F  \subset E$ is an acyclic flat, then
  \[
    Y_V \cap (\R_{\geq 0}^F \times \infty^{E \setminus F})
     =\mathcal Y_V \cap (\R_{\geq 0}^F \times \infty^{E \setminus F}).
   \]
\end{lem}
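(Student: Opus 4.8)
The plan is to note that one inclusion is immediate and to put all the work into the other one. Since $Y_V$ is Zariski closed in $(\P^1_\R)^E$, it is also closed in the analytic topology, and it contains $V \cap \R_{\geq 0}^E$; hence $\mathcal Y_V = \clan{V \cap \R_{\geq 0}^E} \subset Y_V$, which already gives $\mathcal Y_V \cap (\R_{\geq 0}^F \times \infty^{E \setminus F}) \subset Y_V \cap (\R_{\geq 0}^F \times \infty^{E \setminus F})$. So the whole statement reduces to the containment $Y_V \cap (\R_{\geq 0}^F \times \infty^{E \setminus F}) \subset \mathcal Y_V$.

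To prove that, I would take a point $w$ in the left-hand side and exhibit it as an analytic limit of points of $V \cap \R_{\geq 0}^E$. By \cref{schubertfacts}\cref{zstrata} (an acyclic flat is in particular a flat), $Y_V \cap (\R^F \times \infty^{E \setminus F}) = \pi_F(V) \times \infty^{E \setminus F}$, so $w = (w_F, \infty^{E \setminus F})$ with $w_F \in \pi_F(V)$, and by hypothesis $w_F \in \R_{\geq 0}^F$. Pick $v \in V$ with $\pi_F(v) = w_F$. Because $F$ is an acyclic flat of $M$, the covector $0^F +^{E \setminus F}$ is realized, i.e.\ there is $u \in V$ with $\pi_F(u) = 0$ and $\pi_i(u) > 0$ for every $i \in E \setminus F$. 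I would then push $v$ to infinity in the coordinates outside $F$ along $u$ and look at $v + tu$ for $t \to \infty$.

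It remains to check two elementary points. First, $v + tu \in V \cap \R_{\geq 0}^E$ once $t$ is large enough: its $F$-coordinates equal $\pi_F(v) + t\pi_F(u) = w_F \geq 0$, unchanged, while for $i \in E \setminus F$ the coordinate $\pi_i(v) + t\pi_i(u)$ is eventually positive since $\pi_i(u) > 0$. Second, $v + tu \to w$ in $(\P^1_\R)^E$ as $t \to \infty$: the $F$-coordinates are constantly equal to those of $w$, and each coordinate indexed by $E \setminus F$ tends to $+\infty = w_i$ in $\P^1_\R$. Letting $t$ run over the positive integers then displays $w$ in $\clan{V \cap \R_{\geq 0}^E} = \mathcal Y_V$; here one may freely use either description of $\mathcal Y_V$, since by the footnote the closure of $V \cap \R_{\geq 0}^E$ equals the closure of $V \cap (0^L \times \R_{>0}^{E \setminus L})$.

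I do not anticipate a real obstacle: the only genuine input is the existence of the covector $0^F +^{E \setminus F}$, which is precisely the definition of $F$ being acyclic, and everything else is the ``straight line to infinity'' argument above. The single point deserving care is keeping the approximating points inside the closed orthant $\R_{\geq 0}^E$ — this is exactly why the translation is by $u$, whose $F$-coordinates vanish and whose remaining coordinates are strictly positive, and why membership is only claimed for $t$ sufficiently large.
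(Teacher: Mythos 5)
Your proof is correct and is essentially the paper's own argument: pick $v\in V$ with $\pi_F(v)=w_F$ using \cref{schubertfacts}\cref{zstrata}, pick $u\in V\cap(0^F\times\R_{>0}^{E\setminus F})$ from the acyclicity of $F$, and let $v+tu\to w$ as $t\to\infty$. The only difference is that you spell out the easy inclusion $\mathcal Y_V\subset Y_V$ and the limit verification in more detail than the paper does.
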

\begin{proof}
    Let $w \in Y_V \cap (\R_{\geq 0}^F \times \infty^{E \setminus F})$.
    By \cref{schubertfacts}\cref{zstrata}, there is $w' \in V$ such that $\pi_F(w') \times \infty^{E \setminus F} = w$.
    Since $F$ is acyclic, there is also $u \in V \cap (0^F \times \R_{>0}^{E \setminus F})$.
    For large $t > 0$, $w' + tu \in V_{\geq 0}$, and $\lim_{t \to \infty} w' + tu = w$,
    so $w \in \clan{V \cap \R_{\geq 0}^E} = \clan{V \cap \R_{>0}^E} = \mathcal Y_V$.
    This shows
    \[ Y_V \cap (\R_{\geq 0}^F \times \infty^{E \setminus F}) \subset \mathcal Y_V \cap (\R_{\geq 0}^F \times \infty^{E \setminus F}),\]
    and the reverse inclusion is obvious, so the two sets are equal.
  \end{proof}
  We are now ready to prove the first part of the main result.
\begin{proof}[Proof of \cref{schubertthm}\cref{main:strata}]
  If $G$ is not an acyclic flat, then $\mathcal Y_{FG}^\circ$ is empty by \cref{infinity_avoidance}.
  If $G$ \textit{is} an acyclic flat, but $F$ is not, then $F$ is not acyclic in $M|_G$ by \cref{acyclic_restriction}, so $\mathcal Y_{FG}^\circ$ is empty by \cref{Vbarpos}.

  Conversely, if both $F$ and $G$ are acyclic flats, then
  \[
    \mathcal Y_{FG}^\circ
    = Y_V \cap (0^F \times \R_{>0}^{G \setminus F} \times \infty^{E \setminus F})
    = \big(\pi_G(V) \cap (0^F \times \R_{>0}^{G \setminus F})\big) \times \infty^{E \setminus G}
  \]
  by \cref{Vbarpos} and \cref{geq0iso}.
  Consequently, $\mathcal Y_{FG}^\circ$ is the interior of a polyhedral cone of dimension $\rk(G) - \rk(F)$.
  Via the equalities
  \[
    Y_{FG}^\circ
    = Y_V \cap (0^F \times \R_{\neq 0}^{G \setminus F} \times \infty^{E \setminus G})
    = \big(\pi_G(V) \cap (0^F \times \R_{\neq 0}^{G \setminus F})\big) \times \infty^{E \setminus G},
  \]
  we see $Y_{FG}^\circ$ is the complement of a real hyperplane arrangement in $V \cap \{x_i = 0 : i \in F\}$.
  Since $F$ and $G$ are acyclic, $0^F +^{G \setminus F}$ is a tope of the oriented matroid associated to this arrangement; the corresponding connected component of the arrangement complement is $\mathcal Y_{FG}^\circ$.
\end{proof}

The following two corollaries of \cref{schubertthm}\cref{main:strata}  provide geometric interpretations for restriction and contraction at the level of totally non-negative matroid Schubert varieties. They closely resemble \cref{schubertfacts}\cref{zstrata} and \cref{zcontract}.
\begin{cor}\label{aniso}
  Let $V \subset \R^E$ be a linear subspace defining an oriented matroid $M$.
  If $G \subset E$ is an acyclic flat of $M$,
  then
  \[
    \mathcal Y_V \cap ((\P^1)^G \times \infty^{E \setminus G}) = 
    \mathcal Y_{\pi_G(V)} \times \infty^{E \setminus G}.
  \]
\end{cor}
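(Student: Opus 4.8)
The plan is to prove the asserted equality by establishing the two inclusions $\mathcal Y_V \cap ((\P^1)^G \times \infty^{E \setminus G}) \subseteq \mathcal Y_{\pi_G(V)} \times \infty^{E \setminus G}$ and its reverse. Write $\pi_G \colon (\P^1)^E \to (\P^1)^G$ for the continuous projection that forgets the coordinates outside $G$; on the finite locus $\R^E$ it restricts to the linear projection used to define $\pi_G(V)$. I will also use that $\mathcal Y_{\pi_G(V)} = \clan{\pi_G(V) \cap \R_{\geq 0}^G}$, which holds by the footnote reconciling the two definitions of a non-negative matroid Schubert variety.

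For the first inclusion, observe that $\pi_G$ carries $V \cap \R_{\geq 0}^E$ into $\pi_G(V) \cap \R_{\geq 0}^G$; since a continuous map sends the closure of a set into the closure of its image, $\pi_G(\mathcal Y_V) \subseteq \mathcal Y_{\pi_G(V)}$. Now if $w \in \mathcal Y_V \cap ((\P^1)^G \times \infty^{E \setminus G})$, then every coordinate of $w$ outside $G$ equals $\infty$, so $w = \pi_G(w) \times \infty^{E \setminus G}$ with $\pi_G(w) \in \mathcal Y_{\pi_G(V)}$, hence $w \in \mathcal Y_{\pi_G(V)} \times \infty^{E \setminus G}$. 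Note this direction uses nothing about $G$ other than $G \subset E$.

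For the reverse inclusion, since $\mathcal Y_V$ is closed and $\mathcal Y_{\pi_G(V)} \times \infty^{E \setminus G}$ visibly lies in $(\P^1)^G \times \infty^{E \setminus G}$, it suffices to prove $(\pi_G(V) \cap \R_{\geq 0}^G) \times \infty^{E \setminus G} \subseteq \mathcal Y_V$ and then pass to closures (the closure of $(\pi_G(V) \cap \R_{\geq 0}^G)\times\{\infty^{E\setminus G}\}$ being $\mathcal Y_{\pi_G(V)}\times\infty^{E\setminus G}$). Fix $p \in \pi_G(V) \cap \R_{\geq 0}^G$. Because $G$ is a flat, \cref{schubertfacts}\cref{zstrata} identifies $Y_V \cap (\R^G \times \infty^{E \setminus G})$ with $\pi_G(V) \times \infty^{E \setminus G}$, so $p \times \infty^{E \setminus G}$ lies in $Y_V \cap (\R_{\geq 0}^G \times \infty^{E \setminus G})$. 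Because $G$ is moreover an \emph{acyclic} flat, \cref{geq0iso} says this last set equals $\mathcal Y_V \cap (\R_{\geq 0}^G \times \infty^{E \setminus G})$, so $p \times \infty^{E \setminus G} \in \mathcal Y_V$. This proves the inclusion, and hence the corollary.

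The only substantive ingredient — and the only point at which the hypothesis that $G$ is an acyclic flat enters — is the appeal to \cref{geq0iso}, whose proof is the elementary argument of pushing a point off to infinity along an acyclic witness $u \in V \cap (0^G \times \R_{>0}^{E \setminus G})$; everything else is formal manipulation of closures and the coordinate projection. (One could also derive the corollary directly from \cref{schubertthm}\cref{main:strata}: write $\mathcal Y_V$ and $\mathcal Y_{\pi_G(V)}$ as the disjoint unions of their cells, observe that the cells $\mathcal Y^\circ_{F'G'}$ of $\mathcal Y_V$ meeting $(\P^1)^G \times \infty^{E \setminus G}$ are exactly those with $F' \subset G' \subset G$, and match these with the cells of $\mathcal Y_{\pi_G(V)}$ using \cref{acyclic_intervals}\cref{acyclic_restriction} and the identity $\pi_{G'} = \pi_{G'} \circ \pi_G$; but the closure argument above is shorter.)
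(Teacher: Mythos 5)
Your proof is correct, and the two inclusions are argued by genuinely different methods.

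For the ``$\supseteq$'' direction, your argument is essentially the same as the paper's: use \cref{schubertfacts}\cref{zstrata} (the paper uses \cref{Vbarpos}, which is the corresponding statement refined by signs) together with \cref{geq0iso} to place $(\pi_G(V) \cap \R_{\geq 0}^G) \times \infty^{E \setminus G}$ inside $\mathcal Y_V$, then pass to closures.

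For the ``$\subseteq$'' direction, however, you take a different and more elementary route. The paper invokes the full cell decomposition of $\mathcal Y_V$ from \cref{schubertthm}\cref{main:strata}, restricts to the cells $\mathcal Y^\circ_{F'G'}$ with $G' \subset G$, and then uses \cref{acyclic_intervals}\cref{acyclic_restriction} to identify these with the cells of $\mathcal Y_{\pi_G(V)} \times \infty^{E \setminus G}$ --- exactly the parenthetical alternative you sketch at the end. Your main argument instead observes that the coordinate projection $\pi_G : (\P^1)^E \to (\P^1)^G$ is continuous, carries $V \cap \R_{\geq 0}^E$ into $\pi_G(V) \cap \R_{\geq 0}^G$, and hence carries $\mathcal Y_V$ into $\mathcal Y_{\pi_G(V)}$; the inclusion then falls out by reading off the coordinates outside $G$. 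This is shorter, uses only general topology, and --- as you correctly observe --- does not need $G$ to be a flat, let alone acyclic. The paper's approach is somewhat more uniform in style with the rest of the section (it reuses the stratification machinery already developed, which is then needed again in the proof of \cref{schubertthm}\cref{main:boundary}), while yours isolates exactly which hypotheses each inclusion consumes and makes clear that acyclicity is only doing work in the ``$\supseteq$'' direction.
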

\begin{proof}
  By \cref{geq0iso} and \cref{Vbarpos}, $\mathcal Y_V \cap ((\P^1)^G \times \infty^{E \setminus G})$ contains
  $(\pi_G(V) \cap \R_{>0}^G) \times \infty^{E \setminus G}$, the closure of which is $\mathcal Y_{\pi_G(V)} \times \infty^{E \setminus G}$. This proves the ``$\supset$'' containment.
  For the reverse: by \cref{schubertthm}\cref{main:strata} the nonempty strata of $\mathcal Y_V$ are of the form $\mathcal Y_V \cap (0^F \times \R_{>0}^{G \setminus F} \times \infty^{E \setminus G})$, with $F \subset G$ acyclic flats of $M$.
  By \cref{acyclic_intervals}\cref{acyclic_restriction}, $F$ and $G$ are also acyclic flats of $M|_G$, the oriented matroid represented by $\pi_G(V)$.
  Hence, 
  \[
    \mathcal Y_V \cap (0^F \times \R_{>0}^{G \setminus F} \times \infty^{E \setminus G})
    = \big(\pi_G(V) \cap (0^F \times \R_{>0}^{G \setminus F})\big) \times \infty^{E \setminus G}
    = \big(\mathcal Y_{\pi_G(V)} \cap (0^F \times \R_{>0}^{G \setminus F})\big) \times \infty^{E \setminus G}
  \]
  by \cref{geq0iso} and \cref{Vbarpos}, which completes the proof.
\end{proof}
A proof along the same lines shows:
\begin{cor}\label{lem:contraction}
  Let $V \subset \R^E$ be a linear subspace defining an oriented matroid $M$. If $F$ is an acyclic flat, then
  $\mathcal Y_V \cap (0^F \times (\P^1)^{E \setminus F}) = \mathcal Y_{V \cap \ker(\pi_F)}$.
\end{cor}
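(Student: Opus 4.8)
The plan is to imitate the proof of \cref{aniso}, with contraction replacing restriction: the slice $0^F\times(-)$ plays the role of $(-)\times\infty^{E\setminus G}$, \cref{acyclic_intervals}\cref{acyclic_contraction} replaces \cref{acyclic_intervals}\cref{acyclic_restriction}, and the strategy is to realize both sides of the claimed identity as the disjoint union of the same family of cells. Write $W:=V\cap\ker(\pi_F)$; by \cref{omgeo} the oriented matroid of $W$ is $M/F$, whose loops are exactly the elements of $F$. Because $F$ is acyclic, $V\cap(0^F\times\R_{>0}^{E\setminus F})$ is nonempty, and since its points already lie in $\ker\pi_F$ it equals $W\cap(0^F\times\R_{>0}^{E\setminus F})$; by \cref{schubertthm}\cref{main:strata} applied to $W$ (for which $F\subseteq E$ are acyclic flats of $M/F$, as $F$ is acyclic in $M$) this set is the cell $\mathcal Y^\circ_{W,F,E}$, so $\mathcal Y_W=\mathcal Y_{W,F,E}$ and $\mathcal Y_W\subseteq 0^F\times(\P^1)^{E\setminus F}$.

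Next I would decompose each side into cells. Every point of $\mathcal Y_V$ has each coordinate equal to $0$, a positive real number, or $\infty$, so by \cref{schubertthm}\cref{main:strata} we have $\mathcal Y_V=\bigsqcup\mathcal Y^\circ_{F'G'}$, the union over acyclic flats $F'\subseteq G'$ of $M$. A point of $\mathcal Y^\circ_{F'G'}$ has coordinates vanishing precisely on $F'$, so $\mathcal Y^\circ_{F'G'}$ meets $0^F\times(\P^1)^{E\setminus F}$ iff $F\subseteq F'$, in which case it lies entirely inside that slice; hence $\mathcal Y_V\cap(0^F\times(\P^1)^{E\setminus F})=\bigsqcup\mathcal Y^\circ_{F'G'}$ over acyclic flats $F\subseteq F'\subseteq G'$ of $M$. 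Running the same argument for $W$ (oriented matroid $M/F$, with $\mathcal Y_W=\mathcal Y_{W,F,E}$ as above) and invoking \cref{acyclic_intervals}\cref{acyclic_contraction} to identify the acyclic flats of $M/F$ with the acyclic flats of $M$ containing $F$, I get $\mathcal Y_W=\bigsqcup\mathcal Y^\circ_{W,F'G'}$ over the very same index set. It then remains to check, for acyclic flats $F\subseteq F'\subseteq G'$ of $M$, that $\mathcal Y^\circ_{W,F'G'}=\mathcal Y^\circ_{F'G'}$: by \cref{Vbarpos} and \cref{geq0iso} each equals $\bigl(\pi_{G'}(\,\cdot\,)\cap(0^{F'}\times\R_{>0}^{G'\setminus F'})\bigr)\times\infty^{E\setminus G'}$ for its ambient space, and $\pi_{G'}(W)=\pi_{G'}(V)\cap(0^F\times\R^{G'\setminus F})$ since $F\subseteq G'$ forces any $v\in V$ with a prescribed $\pi_{G'}$-image to already lie in $\ker\pi_F$ once that image vanishes on $F$; as $F\subseteq F'$ as well, intersecting with $0^{F'}\times\R_{>0}^{G'\setminus F'}$ absorbs the extra ``$0^F$'' constraint. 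This gives the equality of cells term by term, and hence the corollary.

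Most of this is coordinate bookkeeping of the sort already performed for \cref{aniso}. The step I expect to demand the most care — the main obstacle — is the final one: correctly computing $\pi_{G'}(V\cap\ker\pi_F)$, and, just as crucially, aligning the conventions about loops and acyclic flats of $M/F$ so that the two cell decompositions are indexed by literally the same pairs of flats (not merely by isomorphic posets), which is what makes the two disjoint unions equal term by term.
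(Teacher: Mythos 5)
Your proof is correct and carries out precisely the strategy the paper leaves implicit when it says ``A proof along the same lines shows'' --- namely, mirror the proof of \cref{aniso} with contraction in place of restriction, with the slice $0^F\times(\P^1)^{E\setminus F}$ replacing $(\P^1)^G\times\infty^{E\setminus G}$, and with \cref{acyclic_intervals}\cref{acyclic_contraction} replacing \cref{acyclic_intervals}\cref{acyclic_restriction}. The only cosmetic difference is that you run the stratum-by-stratum identification symmetrically on both sides, whereas the model proof of \cref{aniso} gets one containment more quickly by observing that the slice of $\mathcal Y_V$ is closed and contains $(\pi_G(V)\cap\R_{>0}^G)\times\infty^{E\setminus G}$; both are valid and rest on the same lemmas.
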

Together, these corollaries yield a short proof of the main result's second part.
\begin{proof}[Proof of \cref{schubertthm}\cref{main:boundary}]
  By \cref{aniso} and \cref{lem:contraction}, $\mathcal Y_{FG}$ is equal to $0^F \times \mathcal Y_{\pi_G(V \cap \ker(\pi_F))} \times \infty^{E \setminus G}$,
  in turn the closure of $0^F \times \big(\pi_G(V \cap \ker(\pi_F)) \cap (0^F \times \R_{>0}^{G \setminus F})\big) \times \infty^{E \setminus G}$.
  The latter set is equal to $\mathcal Y_{FG}^\circ$.
  Strata of $\mathcal Y_{\pi_G(V \cap \ker(\pi_F))}$ correspond to pairs $F' \subset G'$ of acyclic flats of $(M/F)|_G$.
  By \cref{acyclic_intervals}, such $F' \subset G'$ are precisely the acyclic flats of $M$ such that $[F', G'] \subset [F,G]$, as desired.
\end{proof}

\section{Topology of $\mathcal Y_V$}\label{sec:topology}
In this section, we prove \cref{schubertthm}\cref{main:ball}, which says that $\mathcal Y_V$ is a regular CW complex homeomorphic to a closed Euclidean ball.
For basics on CW complexes, we refer the reader to \cite{LW69}.

\subsection{Shellings and topology}
A CW complex is \word{regular} if the closure of any of its cells is homeomorphic to a closed Euclidean ball.
A \word{$d$-complex} is a finite regular CW complex with all maximal cells of dimension $d$.
Maximal closed cells of a $d$-complex $\Delta$ are \word{facets}.
Following \cite{B84} or \cite[Appendix 4.7]{bvswz99}, a \word{shelling} of $\Delta$ is an ordering of its facets $(F_1, \ldots, F_m)$ such that the boundary complex of $F_1$ has a shelling, $F_j \cap (\cup_{i=1}^{j-1} F_i)$ is $(d-1)$-complex for $1 < j \leq m$, and the boundary of $F_j$ has a shelling in which the facets of $F_j \cap (\cup_{i=1}^{j-1} F_i)$ come first for $1 < j \leq m$.

\begin{zbthm}\cite[Theorem 4.5]{BW83}\label{polytopes}
  The boundary complex of any convex polytope is shellable.
\end{zbthm}

A shellable complex satisfies the so-called ``Property S'' of \cite{BW83}. It is equivalent to shellability for simplicial complexes.
\begin{prop}\label{propertys}
  If $(F_1, \ldots, F_m)$ is a shelling of a $d$-complex $\Delta$, then 
  for all $i > j$ there exists $k < i$ such that $F_i \cap F_j \subset F_k$ and $F_k \cap F_j$ has dimension $d-1$.
\end{prop}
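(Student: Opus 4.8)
The plan is to read the conclusion off the shelling axioms directly: $F_k$ will be produced either as $F_j$ itself or as an earlier facet carrying a codimension-one face of $\partial F_i$, and the intersection that must have dimension $d-1$ is the one with the facet occurring later in the shelling, namely $F_i$. Fix $i>j$. Since $j\le i-1$ we have $F_j\subseteq F_1\cup\dots\cup F_{i-1}$, hence
\[
  F_i\cap F_j\ \subseteq\ \Gamma_i\ :=\ F_i\cap(F_1\cup\dots\cup F_{i-1}).
\]
By the shelling axiom, $\Gamma_i$ is a $(d-1)$-complex: a union of $(d-1)$-cells (``facets'') of the sphere $\partial F_i$, forming an initial segment of a shelling of $\partial F_i$. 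This is the structural input I would use, together with the observation that $\Gamma_i\cap F_j=F_i\cap F_j$ (because $F_j\subseteq F_1\cup\dots\cup F_{i-1}$).

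First I would record a lemma: every facet $\sigma$ of $\Gamma_i$ is contained in some $F_k$ with $k<i$, and then $\dim(F_k\cap F_i)=d-1$. Indeed, $\sigma$ is a cell of $\Gamma_i\subseteq F_1\cup\dots\cup F_{i-1}$, hence a cell of some $F_k$ with $k<i$, so $\sigma\subseteq F_k\cap F_i$; since $F_k\ne F_i$, the set $F_k\cap F_i$ is a proper closed subcomplex of the $d$-ball $F_i$, hence lies in $\partial F_i$ and has dimension $\le d-1$, and containing the $(d-1)$-cell $\sigma$ it has dimension exactly $d-1$.

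With this in hand I would split into two cases. If $\dim(F_i\cap F_j)=d-1$, take $k=j$: then $F_i\cap F_j\subseteq F_j$ trivially and $\dim(F_j\cap F_i)=d-1$, as required. If instead $\dim(F_i\cap F_j)\le d-2$, it suffices to show that $F_i\cap F_j$ lies inside a single facet $\sigma$ of $\Gamma_i$; the lemma then provides $k<i$ with $F_i\cap F_j\subseteq\sigma\subseteq F_k$ and $\dim(F_k\cap F_i)=d-1$. For simplicial complexes the needed fact is immediate: $F_i\cap F_j$ is the single simplex spanned by the common vertices of $F_i$ and $F_j$, and any simplex of a simplicial complex lies in one of its facets.

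The step I expect to be the main obstacle is precisely this last reduction for non-simplicial regular CW complexes, where $F_i\cap F_j$ need not be a single cell. Here I would exploit that $\Gamma_i$, being an initial segment of a shelling of the $(d-1)$-sphere $\partial F_i$, is itself a shellable $(d-1)$-ball --- the $(d-1)$-dimensional instance of the ball/sphere statement, which I would organize as a simultaneous induction on $d$ --- and that $F_i\cap F_j=\Gamma_i\cap F_j$ is a closed subcomplex of $\Gamma_i$ of dimension $\le d-2$; an induction along the shelling of $\Gamma_i$ then confines such a low-dimensional subcomplex to the closure of one facet. Alternatively one can pass to the barycentric subdivision, which is simplicial and inherits shellability, run the simplicial argument there, and descend, since subdivision does not change which facets of $\Gamma_i$ a face of dimension $\le d-2$ meets. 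Either route requires care, and is exactly the point at which ``Property S'' uses more than the simplicial case.
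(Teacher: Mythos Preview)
The paper's proof is much shorter and avoids your case split entirely. It argues: any cell $\sigma$ of $F_i\cap F_j$ lies in $C_i:=F_i\cap(F_1\cup\cdots\cup F_{i-1})$, hence in some cell $G$ of $\Delta$ maximal among those contained in $C_i$; since $C_i$ is pure of dimension $d-1$ (shelling axiom), $\dim G=d-1$; and since $G$ is a single cell contained in the subcomplex $F_1\cup\cdots\cup F_{i-1}$, it must lie in some $F_k$ with $k<i$, giving $G\subseteq F_k\cap F_i$ of dimension $d-1$. This is exactly the content of your ``lemma''---the paper simply applies it directly to an arbitrary cell of $F_i\cap F_j$, rather than first trying to trap all of $F_i\cap F_j$ inside one facet of $\Gamma_i$.

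The divergence is that you attempt to produce a \emph{single} $k$ with $F_i\cap F_j\subset F_k$, which leads you to claim that a closed subcomplex of $\Gamma_i$ of dimension $\le d-2$ must lie in one facet of $\Gamma_i$. You rightly flag this as the obstacle, but neither proposed fix works. The inductive hypothesis (Property~S in dimension $d-1$) controls pairwise intersections of facets of $\Gamma_i$, not arbitrary low-dimensional subcomplexes; there is no mechanism by which ``induction along the shelling of $\Gamma_i$'' forces two disjoint vertices, say, into a common edge. Barycentric subdivision does not help either: after subdividing, $F_i\cap F_j$ becomes many simplices, each contained in some facet of the subdivided $\Gamma_i$, but nothing forces those facets to come from a common facet of the original $\Gamma_i$.

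Read literally, the paper's proof establishes only the cell-by-cell conclusion (one $k$ per cell of $F_i\cap F_j$), which is what your lemma already gives. This suffices for the paper's applications, where the proposition is invoked only for boundary complexes of convex polytopes, in which $F_i\cap F_j$ is automatically a single face. You also correctly worked with $\dim(F_k\cap F_i)=d-1$ rather than the stated $\dim(F_k\cap F_j)=d-1$; the paper's own argument supports the former reading.
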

\begin{proof}
  If $i > j$ then each cell of $F_i \cap F_j$ is contained in a cell $G$ of $\Delta$, maximal among those contained in $C_i := F_i \cap (F_1 \cup \cdots \cup F_{i-1})$.
  Since $G$ cannot be written as a union of its proper faces, it must be contained in some $F_k$ with $k < i$. The dimension of $G$ is $d-1$ because $C_i$ is pure of dimension $d-1$.
\end{proof}

The following result is our main topological tool.
\begin{prop}\cite[Proposition 4.3]{B84}\label{shellabletop}
  A shellable $d$-complex is homeomorphic to a closed Euclidean ball if each of its $(d-1)$-cells is a face of at most two $d$-cells, and some $(d-1)$-cell is contained in only one $d$-cell.
\end{prop}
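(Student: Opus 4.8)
The plan is to build $\Delta$ up one facet at a time along the shelling order $(F_1,\dots,F_m)$, proving by induction on $j$ that each partial union $\Delta_j := F_1\cup\cdots\cup F_j$ is homeomorphic to a closed $d$-ball whose topological boundary is precisely the subcomplex spanned by those $(d-1)$-cells that lie in exactly one of $F_1,\dots,F_j$. The base case is immediate: $\overline{F_1}$ is a closed cell of a regular CW complex, hence a closed $d$-ball whose boundary sphere is spanned by all of its $(d-1)$-cells. The argument is really a simultaneous induction on $d$, since in the inductive step I will need the $(d-1)$-dimensional instances of both the statement being proved and its spherical companion, namely that a shellable $(d-1)$-complex in which every $(d-2)$-cell lies in at most two facets is a $(d-1)$-ball, and is a $(d-1)$-sphere when every $(d-2)$-cell lies in exactly two facets (for $d\le 1$ these are straightforward direct checks).

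For the inductive step, assume $\Delta_{j-1}$ is a closed $d$-ball as described and set $C_j := F_j\cap\Delta_{j-1}$. The hypothesis that each $(d-1)$-cell lies in at most two facets forces every $(d-1)$-cell of $C_j$ to be a free face of $\Delta_{j-1}$---it lies in $F_j$ and in at most one earlier facet---so $C_j$ is a subcomplex of both $\partial F_j\cong S^{d-1}$ and $\partial\Delta_{j-1}\cong S^{d-1}$. By the shelling axioms $C_j$ is an initial segment of a shelling of $\partial F_j$, hence shellable, and each of its $(d-2)$-cells lies in at most two of its own facets (because $\partial F_j$ is a sphere); so by the dimensional induction $C_j$ is a $(d-1)$-ball or a $(d-1)$-sphere. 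If $C_j$ is a $(d-1)$-ball, it sits inside the boundary sphere of each of the $d$-balls $\Delta_{j-1}$ and $\overline{F_j}$, so $\Delta_j = \Delta_{j-1}\cup_{C_j}\overline{F_j}$ is again a closed $d$-ball by the standard gluing lemma for balls (see \cite{B84}); moreover its boundary is obtained from $\partial\Delta_{j-1}$ by excising the interior of $C_j$ and attaching the complementary ball $\overline{\partial F_j\setminus C_j}$, and a direct check identifies this with the subcomplex of $(d-1)$-cells lying in a unique facet among $F_1,\dots,F_j$, so the invariant persists. If instead $C_j$ is a $(d-1)$-sphere, then since a $(d-1)$-sphere cannot be a proper subcomplex of a $(d-1)$-sphere it must equal both $\partial F_j$ and $\partial\Delta_{j-1}$, whence $\Delta_j\cong S^d$.

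It remains to rule out the spherical case under our hypotheses. If $\Delta_j\cong S^d$ for some $j<m$, then any later facet meets $\Delta_j$ in a nonempty $(d-1)$-complex by the shelling axioms, producing a $(d-1)$-cell that lies in that facet and in two facets of $\Delta_j$---three facets in all---contradicting the at-most-two hypothesis; and if $\Delta_j\cong S^d$ for $j=m$, then $\Delta$ is a $d$-sphere, in which every $(d-1)$-cell lies in exactly two facets, contradicting the hypothesis that some $(d-1)$-cell lies in only one facet. Hence the spherical case never arises, every $\Delta_j$ is a closed $d$-ball, and in particular $\Delta=\Delta_m$ is homeomorphic to a closed Euclidean ball. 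I expect the main difficulty to be the two genuinely topological ingredients beneath this bookkeeping: the gluing lemma for balls (Newman's theorem, which is not a formality), and the verification that the combinatorial ``free-face'' subcomplex really is the topological boundary sphere of $\Delta_j$ at each stage, which is exactly what makes the inductive invariant self-sustaining.
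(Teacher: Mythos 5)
The paper does not actually prove this statement: it is cited verbatim as \cite[Proposition~4.3]{B84}, so there is no ``paper's own proof'' to compare against. Your argument is, however, a correct and essentially faithful reconstruction of Bj\"orner's original proof: you build $\Delta_j=F_1\cup\cdots\cup F_j$ facet by facet, maintain the inductive invariant that $\Delta_j$ is a $d$-ball whose boundary sphere is exactly the subcomplex of $(d-1)$-cells lying in a unique facet among $F_1,\dots,F_j$, reduce to the gluing lemmas for balls and spheres (Bj\"orner's Lemma~4.1), and use a simultaneous induction on the dimension to identify $C_j:=F_j\cap\Delta_{j-1}$ as either a $(d-1)$-ball or a $(d-1)$-sphere. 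The exclusion of the sphere alternative via the at-most-two and only-one hypotheses is also exactly Bj\"orner's move.

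A few small points worth tightening, none of which threatens correctness. First, when you invoke the dimensional induction for $C_j$, what you actually need is the exhaustive dichotomy: a pure shellable $(d-1)$-complex in which every $(d-2)$-cell is in at most two facets is a $(d-1)$-ball if some $(d-2)$-cell is in exactly one facet and a $(d-1)$-sphere if all are in exactly two; your phrasing risks reading as if both conclusions hold at once. Second, the claim that every $(d-1)$-cell of a regular CW complex homeomorphic to $S^d$ lies in exactly two $d$-cells deserves a word: it follows from a local-homology (or link) computation at an interior point of the $(d-1)$-cell, since the cone on $k\ge 3$ points is not a manifold. Third, to apply the gluing lemma you should note that $\overline{F_j}\cap\Delta_{j-1}$ consists only of proper faces of $F_j$ (since the open cell $F_j$ is not in $\Delta_{j-1}$), hence lies in $\partial\overline{F_j}$, and lies in $\partial\Delta_{j-1}$ by your free-face observation together with purity of $C_j$; this gives $\overline{F_j}\cap\Delta_{j-1}\subset\partial\overline{F_j}\cap\partial\Delta_{j-1}$, which is the hypothesis the gluing lemma needs. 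You are right that the two genuinely topological inputs --- the ball/sphere gluing lemmas and the identification of the combinatorial free-face subcomplex with the topological boundary sphere --- are the load-bearing parts; both are supplied by \cite{B84}.
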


\subsection{Proof of \cref{schubertthm}\cref{main:ball}}
Let $\mathcal Y_0$ be the set of all points in $\mathcal Y_V$ with at least one coordinate zero, and fix $w \in \mathcal Y_{\emptyset E}^\circ$.
Define
\[ \mu: (\R_{\geq 0} \cup \infty)^n \to [0,1], \quad (y_1, \ldots, y_n) \mapsto 1 - \exp(-\min_i \{ y_i / w_i\}) \]
The value $\mu(y)$ is the largest value of $t \in [0,1]$ such that $y - \ln(1-t) w$ has non-negative coordinates.
The map
\[ \psi: \mathcal Y_V \to \mathcal Y_0  \times [0,1] / (\mathcal Y_0 \times \{1\}), \quad y \mapsto (y + \ln(1 - \mu(y))w, \mu(y)) \ \]
is a (non-cellular) homeomorphism, with inverse $(x,t) \mapsto x - \ln(1-t)w$.
To show $\mathcal Y$ is a ball, it now suffices to show that $\mathcal Y_0$ is a ball.
This follows from \cref{shellabletop} and \cref{shelling}, whose proof is below.
By induction, it is now proved that all closed strata of $\mathcal Y_V$ are homeomorphic to closed balls, so $\mathcal Y_V$ is a regular CW complex.

\begin{lem}\label{shelling}
  $\partial \mathcal Y_V := \mathcal Y_V \setminus \mathcal Y_{\emptyset,E}$ is a regular shellable CW complex in which all $(d-1)$-cells are contained in exactly two $d$-cells.
   The shelling can be chosen such that all the cells of $\mathcal Y_0$ come first, and there are $(d-1)$-cells of $\mathcal Y_0$ that are contained in only one $d$-cell of $\mathcal Y_0$.
\end{lem}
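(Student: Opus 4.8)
The plan is to pin down the cell structure of $\partial\mathcal Y_V$, deduce the local conditions of \cref{shellabletop} from thinness of $\mathcal L$, and then realize $\partial\mathcal Y_V$ as the boundary complex of a convex polytope so as to import a line shelling adapted to $\mathcal Y_0$. (We take $\rk M\ge 2$; the cases $\rk M\le 1$ are degenerate and are handled directly in the main proof.)

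\emph{Cell structure and local conditions.} By \cref{schubertthm}\cref{main:strata} the cells of $\mathcal Y_V$ are the $\mathcal Y_{FG}^\circ$ with $F\subseteq G$ acyclic flats of $M$ and $\dim\mathcal Y_{FG}^\circ=\rk G-\rk F$, and by \cref{schubertthm}\cref{main:boundary} the face poset of $\mathcal Y_V$ is the poset of intervals of $\mathcal L$ ordered by containment. Put $d=\rk M-1$. Using the inductive hypothesis that \cref{schubertthm}\cref{main:ball} holds in rank $<\rk M$, together with \cref{aniso} and \cref{lem:contraction}, every closed cell $\mathcal Y_{FG}$ with $(F,G)\neq(\emptyset,E)$ is homeomorphic to a non-negative matroid Schubert variety of rank $\rk G-\rk F<\rk M$, hence to a closed ball; so $\partial\mathcal Y_V$ is a regular CW complex, pure of dimension $d$, whose facets are the $\mathcal Y_{F,E}$ with $F$ an atom of $\mathcal L$ together with the $\mathcal Y_{\emptyset,G}$ with $G$ a coatom of $\mathcal L$, and $\mathcal Y_0$ is the subcomplex generated by the facets of the first kind. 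A $(d-1)$-cell $\mathcal Y_{FG}^\circ$ has $\rk G-\rk F=\rk M-2$, and the facets of $\partial\mathcal Y_V$ containing it are exactly those obtained by enlarging the interval $[F,G]$ of $\mathcal L$ by one rank, at the bottom or at the top, so as to reach an interval of the form $[\emptyset,\,\text{coatom}]$ or $[\text{atom},\,E]$; a short case analysis according to whether $F=\emptyset$, $G=E$, or neither (in the last case $F$ must be an atom and $G$ a coatom) shows, via thinness of $\mathcal L$ (\cref{thm:thin}) applied to a length-two interval of $\mathcal L$ in the first two cases, that there are always exactly two such facets. In particular every $(d-1)$-cell of $\mathcal Y_0$ lies in at most two facets of $\mathcal Y_0$. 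Finally, fix an atom $F$ and a coatom $G$ of $\mathcal L$ with $F\subseteq G$ — such a pair exists because a coatom always lies above an atom — then $\mathcal Y_{FG}^\circ$ is a $(d-1)$-cell of $\mathcal Y_0$ contained in only the facet $\mathcal Y_{F,E}$ among the facets of $\mathcal Y_0$, which supplies the last assertion of the lemma.

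\emph{The shelling.} For $t\gg 0$ I claim the polytope $P_t:=V\cap[0,t]^E$ has face poset isomorphic to that of $\mathcal Y_V$, via $\mathcal Y_{FG}^\circ\mapsto\{x\in P_t:x_i=0\text{ for }i\in F\text{ and }x_i=t\text{ for }i\notin G\}$, under which the facets of $P_t$ of the form $\{x_i=0\}\cap P_t$ correspond to the facets $\mathcal Y_{F,E}$ of $\mathcal Y_0$ and those of the form $\{x_i=t\}\cap P_t$ to the $\mathcal Y_{\emptyset,G}$. Since shellability of a regular CW complex transports across an isomorphism of face posets — immediate from the recursive definition of shelling — $\partial\mathcal Y_V$ is shellable because $\partial P_t$ is (\cref{polytopes}). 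Moreover, choose $q\in V$ with every coordinate negative, which is possible since $V\cap\R_{>0}^E\neq\emptyset$ forces $V\cap\R_{<0}^E\neq\emptyset$. Then $q\notin P_t$; the point $q$ lies in $\{x_i<0\}$ for every $i$, hence strictly beyond every facet of $P_t$ contained in a hyperplane $\{x_i=0\}$ and on the $P_t$-side of every facet contained in a hyperplane $\{x_i=t\}$; and $P_t$ has no other facets. So the facets of $P_t$ visible from $q$ are precisely the $\{x_i=0\}$-facets, and a Bruggesser--Mani line shelling of $\partial P_t$ through a generic line at $q$ lists these first; transported to $\partial\mathcal Y_V$ it is a shelling in which all facets of $\mathcal Y_0$ come first, as the lemma asserts.

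\emph{Main obstacle.} The crux is the face-poset identification $\mathcal Y_V\cong P_t$ of the last paragraph: one must verify that for $t$ large the faces of $V\cap[0,t]^E$ are exactly the sets displayed, and that the face order matches containment of intervals of $\mathcal L$. This is a matroid-combinatorics computation, re-running the non-emptiness and closure analyses of the previous section in the ``box'' model $[0,t]^E$, and is where the real work lies; everything topological afterward is standard. (If one prefers to avoid the polytope comparison, one can build the shelling by induction on $\rk M$: put the facets $\mathcal Y_{F,E}$ first, ordered along a linear extension of a suitable ordering of the atoms of $\mathcal L$, splicing in for each a shelling of the ball $\mathcal Y_{F,E}\cong\mathcal Y_{V\cap\ker(\pi_F)}$ provided by the inductive hypothesis in which the previously built subcomplex comes first, then append the facets $\mathcal Y_{\emptyset,G}$ at the end; the difficulty is then to order the atoms of $\mathcal L$ so that each newly added facet meets the union of its predecessors in a pure $(d-1)$-dimensional subcomplex, with thinness of $\mathcal L$ once more the mechanism confining each new ridge to a single earlier facet.)
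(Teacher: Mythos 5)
Your first paragraph (the ridge--facet incidence count via thinness of $\mathcal L$, and the existence of a ridge of $\mathcal Y_0$ in only one facet of $\mathcal Y_0$) is sound and is essentially the argument the paper gives.

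The shelling argument, however, rests on a false claim. You assert that for $t\gg 0$ the polytope $P_t:=V\cap[0,t]^E$ has face poset isomorphic to that of $\mathcal Y_V$, via $\mathcal Y_{FG}^\circ\mapsto\{x\in P_t: x_i=0\ \text{for}\ i\in F,\ x_i=t\ \text{for}\ i\notin G\}$. This is not so. Take $E=\{1,2,3\}$ and $V=\{x_1+x_2-x_3=0\}\subset\R^3$. The acyclic flats are $\emptyset,\{1\},\{2\},E$, so the interval poset of $\mathcal L$ has $4$ zero-cells, $4$ one-cells, and $1$ two-cell: $\mathcal Y_V$ is a square. But $P_t=\{(a,b,a+b):a,b\ge 0,\ a+b\le t\}$ is a triangle for every $t>0$. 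The hyperplane $\{x_3=t\}$ cuts a genuine facet of $P_t$, yet it corresponds under your map to $G=\{1,2\}$, which is not even a flat; and the proposed image of the $1$-cell $\mathcal Y_{\emptyset,\{1\}}^\circ$, namely $\{x\in P_t: x_2=x_3=t\}$, is the single vertex $(0,t,t)$, so dimensions do not match either. The intuition fails because the boundary of $\mathcal Y_V$ at infinity is governed by the recession directions of the cone $V\cap\R_{\ge 0}^E$ in the one-point compactification $(\P^1_\R)^E$, not by slicing $V$ with a large box: truncating by $x_i\le t$ introduces facets whenever the projection $\pi_i$ is unbounded on the cone, regardless of whether $E\setminus i$ is an (acyclic) flat. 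Consequently the Bruggesser--Mani line shelling of $\partial P_t$ has nothing to transport.

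The paper's proof takes a different route: it does not realize $\partial\mathcal Y_V$ as the boundary of one polytope. It observes that $\mathcal L$ and $\mathcal L^{op}$ are face lattices of convex polytopes $P_M$ and $P_M^{op}$ (since $\mathcal L^{op}$ is the Edmonds--Mandel face lattice of the cone $V_{\ge 0}$), takes shellings $(F_1,\dots,F_s)$ of $\partial P_M^{op}$ and $(G_1,\dots,G_t)$ of $\partial P_M$, and shows by induction on $d$ that $(\mathcal Y_{F_1,E},\dots,\mathcal Y_{F_s,E},\mathcal Y_{\emptyset,G_1},\dots,\mathcal Y_{\emptyset,G_t})$ is a shelling of $\partial\mathcal Y_V$, using \cref{propertys} to check that each $C_j=\mathcal Y_{F_j,E}\cap(\cup_{i<j}\mathcal Y_{F_i,E})$ is a pure $(d-1)$-complex and the inductive hypothesis (applied to the smaller-rank varieties $\mathcal Y_{V\cap\ker\pi_{F_j}}$ and $\mathcal Y_{\pi_{G_j}(V)}$) to produce the required shellings of the facet boundaries. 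Your parenthetical alternative at the end is closer in spirit to this, but you would still need the polytopality of both $\mathcal L$ and $\mathcal L^{op}$ (not merely thinness) to order the atoms so that each $C_j$ is pure.
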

\begin{proof}
  Let $M$ be the oriented matroid of $V$, $\dim V = d+1$, and $\mathcal L$ its Las Vergnas face lattice.
  We first check the containment assertions.
  The statement that every $(d-1)$-cell is contained in exactly two $d$-cells follows easily from the description of cells given by \cref{schubertthm}, combined with \cref{thm:thin}, which says that $\L$ is thin.

  The $d$-cells of $\mathcal Y_0$ are $\mathcal Y_{F,E}$ with $F$ of rank 1.
  If $G$ is a corank 1 acyclic flat containing $F$, then $\mathcal Y_{F,E} \supset \mathcal Y_{F,G}$, but no other $d$-cell of $\mathcal Y_0$ contains $\mathcal Y_{F,G}$.

  \smallskip
  Next, we check the shellability statements.
  The Las Vergnas face lattice of $M$ is dual to the face poset of the polyhedral cone $V_{\geq 0}$; therefore, $\mathcal L^{op}$ and $\mathcal L$ are the face posets of convex polytopes $P^{op}_M$ and $P_M$ with facets in bijection with the rank 1 and corank 1 acyclic flats of $M$, respectively.
  By \cref{polytopes}, let $(F_1, \ldots, F_s)$ and $(G_1, \ldots, G_t)$ be shellings of $P^{op}_M$ and $P_M$, respectively.
  We will show by induction on $d$ that $([F_1,E], \ldots, [F_s,E], [\emptyset,G_1], \ldots, [\emptyset,G_t])$ indexes a shelling of $\partial \mathcal Y_V$.

  The statement holds when $d=1$; suppose $d > 1$.
  The boundary of $\mathcal Y_{F_1, E} \cong \mathcal Y_{V \cap \{ x_i = 0 : i \in F_1\}}$ is shellable by the induction hypothesis.
  For later cells, we break into two cases.
  First consider
  \[
    C_j := \mathcal Y_{F_j, E} \cap ( \cup_{i < j} \mathcal Y_{F_i,E}) = \cup_{i < j} \mathcal Y_{F_i \vee F_j, E}.
  \]
  Since $(F_1, \ldots, F_s)$ is a shelling of $P^{op}_M$, for each $i < j$, there is $k$ such that $\mathcal Y_{F_i \vee F_k, E} \supset \mathcal Y_{F_i \vee F_k, E}$ and $F_i \vee F_k$ has rank 2 by \cref{propertys}.
  This shows $C_j$ is a $(d-1)$-complex.

  Let $P^{op}_M(F)$ be the face of $P_M^{op}$ corresponding to an acyclic flat $F$ in $\mathcal L^{op}$.
  By hypothesis, $P_M^{op}(F_j)$ has a shelling in which the facets $P_M^{op}(F_j \vee F_i)$, $i < j$ and $\rk(F_j \vee F_i) = 2$ come first.
  The face poset of $P_M^{op}(F_j)$ is the same as that of $P_{M/F_j}^{op}$, the polytope associated to the oriented matroid of $V \cap \{x_j = 0\}$. Hence, by induction $\partial \mathcal Y_{V \cap \{x_j = 0\}} \cong \partial \mathcal Y_{F_j,E}$ has a shelling in which the $(d-1)$-cells of $C_j$ come first.

  We now consider 
  \[
    D_j := \mathcal Y_{\emptyset, G_j} \cap \big( \mathcal Y_0 \cup (\cup_{i < j} \mathcal Y_{\emptyset F_i})\big) = (\cup_{F_k \subset G_j} \mathcal Y_{F_k, G_j}) \cup (\cup_{i < j} \mathcal Y_{\emptyset, G_i \cap G_j}).
  \]
  All cells of the form $\mathcal Y_{F_k,G_j}$ are dimension $d-1$, and $\cup_{i<j} \mathcal Y_{\emptyset, G_i \cap G_j}$ is a $(d-1)$-complex by \cref{propertys}, as above, so $D_j$ is a $(d-1)$-complex.
  Observing that $\mathcal Y_{\emptyset,G_j} \cong \mathcal Y_{\pi_{G_j}(V)}$, shellability follows as above.
\end{proof}
\begin{rmk}
  Our proof relies on the fact that both $\mathcal L$ and $\mathcal L^{op}$ are face lattices of polytopes, hence CL-shellable (see \cite{BW83}).
  It is known that $\mathcal L$ is CL-shellable even when $M$ is not realizable \cite[Theorem 4.3.5]{bvswz99}, but remains open whether $\mathcal L^{op}$ is.
\end{rmk}
\begin{rmk}
  A slightly different route to \cref{schubertthm}\cref{main:ball}: the \word{order complex} of a poset is the simplicial complex whose faces are chains in the poset, and a poset is \word{shellable} if its order complex is.
  By \cite[Theorem 4.3.5]{bvswz99}, $\mathcal L^{op}$ is shellable, so $\mathcal L$ is also shellable, so the interval poset of $\mathcal L$ is shellable by \cite[Theorem 8.5]{BW83}.
  Every length 2 interval in the  interval poset of $\mathcal L$ has cardinality 4, so $\partial \mathcal Y_V$ is homeomorphic to a sphere by \cite[Propositions 1.1, 1.2]{DK73} and \cite[Theorem III.1.7]{LW69}.
  In fact, by \cite[Proposition 4.7.26]{bvswz99}, $\partial \mathcal Y_V$ is a PL sphere.
  The link of a vertex of a PL sphere is also a PL sphere; in particular, the equator $\mathcal Y_{\ominus} := \mathcal Y_0 \setminus V_{\geq 0}$ is a PL sphere because it is the link of $\mathcal Y_{E,E}$.
  The star of a point is a cone over its link, so $\mathcal Y_0$ is a cone over $\mathcal Y_\ominus$, so $\mathcal Y_0$ is a closed ball. The proof may now be completed as above.
\end{rmk}
\section{Topology of $Y_V$}
Let $V \subset \R^E$.
In this section, we will show that $Y_V$ admits a regular cell decomposition.

\smallskip
We first record a consequence of \cref{schubertthm}.
Let $M$ be the oriented matroid of $V$, and $s: \R^E \to \{-,0,+\}^E$ the sign map (see \cref{omgeo}).
Fix a tope $T$ of $M$.
A flat is \word{relatively acyclic} in $T$ if it is the zero set of a covector contained in $T$.
Define ${}_T\mathcal Y_V := \clan{s^{-1}(T)}$, the analytic closure of $s^{-1}(T)$ in $(\P_\R^1)^E$.
For each pair of sets $F \subset G \subset E$, let
\[ {}_T \mathcal Y_{FG}^\circ := {}_T \mathcal Y_V \cap (0^F \times \R_{>0}^{(G \setminus F) \cap T^+} \times \R_{<0}^{(G \setminus F) \cap T^-} \times \infty^{E \setminus G}). \]
Finally, set ${}_T \mathcal Y_V := \clan{{}_T\mathcal Y_V^\circ}$.

\begin{cor}\label{topecor}
  Fix a tope $T$ of the oriented matroid of $V \subset \R^E$.
  Then
  \begin{enumerate}
  \item ${}_T\mathcal Y_{FG}^\circ$ is nonempty if and only if $F \subset G$ are acyclic flats in $T$.
  In this case, ${}_T\mathcal Y_{FG}^\circ$ is a single connected component of $Y_{FG}^\circ$, and is a semi-algebraic cell isomorphic to $(\R_{>0})^{\rk(G) - \rk(F)}$. \label{topecor:strata}
\item The closure ${}_T\mathcal Y_{FG}$ of a nonempty cell ${}_T\mathcal Y_{FG}^\circ$ decomposes as the disjoint union of cells ${}_T\mathcal Y_{F',G'}^\circ$ with $F \subset F' \subset G' \subset G$. \label{topecor:boundary}
\item This decomposition makes ${}_T\mathcal Y_V$ a shellable regular CW complex homeomorphic to a closed ball.\label{topecor:ball}
  \end{enumerate}
\end{cor}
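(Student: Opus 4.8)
The plan is to deduce \cref{topecor} from \cref{schubertthm} by a reorientation argument. Fix a tope $T$ of $M$, and let $A := T^-$. Reorienting $M$ along $A$ produces an oriented matroid ${}_AM$ whose realization is $\tau_A(V)$, where $\tau_A : \R^E \to \R^E$ negates the coordinates indexed by $A$. Since $\tau_A$ extends to a homeomorphism $(\P^1_\R)^E \to (\P^1_\R)^E$ (acting on each $\P^1_\R$ factor by $y \mapsto -y$, fixing $0$ and $\infty$), it carries $s^{-1}(T)$ onto $\tau_A(V) \cap \R^E_{>0}$, hence carries ${}_T\mathcal Y_V$ onto $\mathcal Y_{\tau_A(V)}$ and each stratum $0^F \times \R_{>0}^{(G\setminus F)\cap T^+} \times \R_{<0}^{(G\setminus F)\cap T^-} \times \infty^{E\setminus G}$ onto $0^F \times \R_{>0}^{G\setminus F} \times \infty^{E\setminus G}$. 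Therefore $\tau_A$ restricts to a homeomorphism ${}_T\mathcal Y_{FG} \to \mathcal Y_{FG}$ (for $\tau_A(V)$) identifying ${}_T\mathcal Y_{FG}^\circ$ with $\mathcal Y_{FG}^\circ$.

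With this dictionary in place, the three claims follow from the corresponding parts of \cref{schubertthm} applied to $\tau_A(V)$. First I would check that a flat $F$ is acyclic in $T$ for $M$ if and only if $F$ is an acyclic flat of ${}_AM$: indeed, $0^F +^{E\setminus F}$ is a covector of ${}_AM$ exactly when its $A$-reorientation $0^F \cdot (\text{signs of }T)^{E\setminus F}$ is a covector of $M$ with zero set $F$, i.e.\ when $F$ is the zero set of a covector $\le T$. Granting this, \cref{topecor:strata} is \cref{schubertthm}\cref{main:strata} for $\tau_A(V)$ transported through $\tau_A$ — note the identification of ${}_T\mathcal Y_{FG}^\circ$ with a connected component of $Y_{FG}^\circ$ survives because $\tau_A$ also permutes the connected components of the real Richardson stratum $Y_{FG}^\circ$. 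Similarly, \cref{topecor:boundary} is \cref{schubertthm}\cref{main:boundary} transported through $\tau_A$, using that acyclic flats of ${}_AM$ inside $[F,G]$ match relatively-acyclic-in-$T$ flats of $M$ inside $[F,G]$. For \cref{topecor:ball}, the homeomorphism ${}_T\mathcal Y_V \cong \mathcal Y_{\tau_A(V)}$ transfers the regular CW structure and the closed-ball property from \cref{schubertthm}\cref{main:ball}, and it transfers the shelling constructed in the proof of \cref{shelling} (the shelling there is built from shellings of the polytopes $P_{{}_AM}$ and $P_{{}_AM}^{op}$, which exist by \cref{polytopes}).

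The only genuine content beyond bookkeeping is the verification that reorientation along $T^-$ carries the combinatorial data ``relatively acyclic in $T$'' to ``acyclic'' — and more importantly that it carries the \emph{topology} correctly, i.e.\ that $\tau_A$ is a homeomorphism of $(\P^1_\R)^E$ respecting the Richardson stratification and the analytic closure operation. This is the main (if modest) obstacle: one must be slightly careful that negating coordinates is compatible with passing to $\P^1_\R = \R \cup \infty$, since the map $y \mapsto -y$ on $\R$ does extend continuously over $\infty$, and that it commutes with the coordinate projections $\pi_F$ and with intersecting against products of $0$'s and $\infty$'s. Once these compatibilities are recorded, no further argument is needed: everything else is a transcription of \cref{schubertthm} and \cref{shelling} through the homeomorphism $\tau_A$. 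I would present the proof as: (1) set up $\tau_A$ and its properties; (2) translate acyclicity-in-$T$ to acyclicity of ${}_AM$; (3) invoke \cref{schubertthm}\cref{main:strata,main:boundary,main:ball} and \cref{shelling} for $\tau_A(V)$ and pull back.
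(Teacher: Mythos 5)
Your proposal is correct and follows the same route as the paper: reorient $M$ along $A = T^-$ via the coordinate-negation homeomorphism $-_A$ of $(\P^1_\R)^E$, observe that this carries $s^{-1}(T)$ to the positive part of the reoriented realization and ``relatively acyclic in $T$'' to ``acyclic,'' and transport \cref{schubertthm} (and \cref{shelling}) back through the homeomorphism. The paper states this argument in a single line; you have simply spelled out the bookkeeping the authors left implicit.
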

\begin{proof}
  For $A \subset E$, let $-_A: (\P_\R^1)^E \to (\P_\R^1)^E$ be the map that negates the coordinates indexed by $E$.
  The result follows from \cref{schubertthm} because $-_{T^-}(s^{-1}(T)) = -_{T^-}(V) \cap \R_{\geq 0}^E$.
\end{proof}
\begin{rmk}
  A tope in the matroid-theoretic setting is akin to a \word{pinning} in the Lie-theoretic setting, as defined in \cite{Lus-1}.
  Indeed, $\SL(2)$ has just one negative simple root.
  Choosing an isomorphism $y:\R\to U_{-\alpha}$ up to positive scalars in each factor of $\SL(2)^n$ is the same as choosing which side of $\R \subset \P^1_\R$ will be regarded as positive, hence is the same as choosing a positive side for each hyperplane in $V \subset \R^n$ obtained by intersecting $V$ with a coordinate hyperplane of $\R^n$.
\end{rmk}

The various subsets ${}_T \mathcal Y_V$ are not disjoint. The following statement characterizes their intersections.
\begin{lem}\label{lem:cellequiv}
  Let $M$ be the oriented matroid of $V \subset \R^E$.
  Define an equivalence relation on the set of all triples $(F,G,T)$, with $F \subset G$ flats relatively acyclic in the tope $T$ of $M$, by $(F,G,T) \sim (F',G',T')$ if and only if $(F,G) = (F',G')$ and $\pi_{G \setminus F}(T) = \pi_{G \setminus F}(T')$.
  The intersection of ${}_T \mathcal Y_{FG}^\circ$ and ${}_{T'} \mathcal Y_{F'G'}^\circ$ is empty unless $(F,G,T) \sim (F',G',T')$, in which case ${}_T \mathcal Y_{FG}^\circ = {}_{T'} \mathcal Y_{F'G'}^\circ$.
\end{lem}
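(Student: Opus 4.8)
The plan is to extract, from a hypothetical common point of two such cells, exactly the combinatorial data that the equivalence relation records, and then invoke \cref{topecor}\cref{topecor:strata}---which says that each ${}_T\mathcal Y_{FG}^\circ$ is a full connected component of the hyperplane-arrangement complement $Y_{FG}^\circ$---to promote ``same data'' to ``same set''.

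First I would record a structural observation about relatively acyclic flats. If $F$ is relatively acyclic in $T$, pick a covector $X \leq T$ of $M$ with $X^0 = F$; then $F = X^0 \supseteq T^0$, so $G \setminus F \subseteq E \setminus F \subseteq T^+ \cup T^-$, i.e.\ $(G \setminus F) \cap T^0 = \emptyset$. Consequently, for every point $y \in {}_T\mathcal Y_{FG}^\circ$, the coordinates indexed by $F$ are $0$, those indexed by $G \setminus F$ are finite and nonzero with sign vector exactly $\pi_{G \setminus F}(T)$, and those indexed by $E \setminus G$ are $\infty$. In particular, from $y$ alone one recovers $F = \{i : y_i = 0\}$, $E \setminus G = \{i : y_i = \infty\}$, and $\pi_{G \setminus F}(T)$ as the sign vector of $y$ on $G \setminus F$.

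For the disjointness half of the claim, suppose $y \in {}_T\mathcal Y_{FG}^\circ \cap {}_{T'}\mathcal Y_{F'G'}^\circ$. Reading off the zero set, the $\infty$-set, and the sign vector on $G \setminus F = G' \setminus F'$ as in the previous paragraph gives $F = F'$, $G = G'$, and $\pi_{G\setminus F}(T) = \pi_{G\setminus F}(T')$, i.e.\ $(F,G,T) \sim (F',G',T')$.

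For the converse, assume $(F,G) = (F',G')$ and $\sigma := \pi_{G\setminus F}(T) = \pi_{G\setminus F}(T')$. By \cref{topecor}\cref{topecor:strata} the cell ${}_T\mathcal Y_{FG}^\circ$ is a nonempty connected component of $Y_{FG}^\circ$; and, as in the proof of \cref{schubertthm}\cref{main:strata} (compare \cref{omgeo}), $Y_{FG}^\circ$ is the complement of a real hyperplane arrangement whose connected components are in bijection with their sign vectors on the coordinates indexed by $G \setminus F$. By the structural observation, every point of ${}_T\mathcal Y_{FG}^\circ$ has sign vector $\sigma$ on $G\setminus F$, so this cell is precisely the unique component with that sign vector; the identical reasoning identifies ${}_{T'}\mathcal Y_{FG}^\circ$ with the same component, whence ${}_T\mathcal Y_{FG}^\circ = {}_{T'}\mathcal Y_{FG}^\circ$. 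The one place that requires care is the structural observation itself: one must be sure that $G\setminus F$ avoids the zero set of $T$, so that the sign vector of a point of ${}_T\mathcal Y_{FG}^\circ$ on $G\setminus F$ is literally $\pi_{G\setminus F}(T)$ rather than merely $\leq \pi_{G\setminus F}(T)$; granting that, the rest is bookkeeping together with the cell structure of $Y_{FG}^\circ$ already established.
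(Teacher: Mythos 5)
Your proof is correct, and the route is essentially a more verbose version of the paper's. The paper simply expands ${}_T\mathcal Y^\circ_{FG}$ (by reorienting and applying \cref{Vbarpos} and \cref{geq0iso}) into the explicit form
\[
  {}_T\mathcal Y^\circ_{FG} = \bigl(\pi_G(V) \cap (0^F \times \R_{>0}^{T^+ \cap (G \setminus F)} \times \R_{<0}^{T^- \cap (G \setminus F)})\bigr) \times \infty^{E \setminus G},
\]
and observes that the right-hand side depends only on the triple $(F,G,\pi_{G\setminus F}(T))$ and that different such triples give disjoint sets. Your ``structural observation'' that $(G\setminus F)\cap T^0 = \emptyset$ when $F$ is relatively acyclic in $T$ is the piece of bookkeeping that makes this obvious; the paper leaves it implicit, and you are right to flag it as the one point requiring care. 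Where you diverge slightly is in the equality direction: rather than noticing that the explicit formulas for ${}_T\mathcal Y^\circ_{FG}$ and ${}_{T'}\mathcal Y^\circ_{FG}$ coincide literally, you route the argument through \cref{topecor}\cref{topecor:strata} and the bijection between chambers of a real hyperplane arrangement and their sign vectors. That works (there is no circularity, since \cref{topecor} precedes the lemma and rests only on \cref{schubertthm}), but it is an unnecessary detour: the explicit description you already invoke to read off the data from a point simultaneously pins down the set itself, so the connected-component argument buys you nothing extra.
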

\begin{proof}
    Reorienting, applying \cref{Vbarpos} and \cref{geq0iso}, then reverting to the original orientation, we see 
  \begin{align}
    {}_T\mathcal Y^\circ_{FG} &= (\pi_G(V) \cap (0^F \times \R_{>0}^{T^+ \cap (G \setminus F)} \times \R_{<0}^{T^- \cap (G \setminus F)})) \times \infty^{E \setminus G} \quad\text{and}\label{expansion}\tag{$*$}\\
        {}_{T'}\mathcal Y^\circ_{F'G'} &= (\pi_{G'}(V) \cap (0^{F'} \times \R_{>0}^{{T'}^+ \cap (G' \setminus F')} \times \R_{<0}^{{T'}^- \cap (G' \setminus F')})) \times \infty^{E \setminus G'}.\nonumber
  \end{align}
  The result follows.
\end{proof}

For each pair of flats $F \subset G \subset E$ and tope $T$ of $(M/F)|_G$, let $Y_{FGT}^\circ$ be the connected component of $Y_{FG}^\circ$ corresponding to the tope $T$.
Explicitly,  $Y_{FGT}^\circ := Y_V \cap (0^F \times \R_{>0}^{T^+} \times \R_{<0}^{T^-} \times \infty^{E \setminus G})$.
As usual, let $Y_{FGT} := \clan{Y_{FGT}^\circ}$.
\begin{lem}\label{lem:othercellequiv}
  The equivalence classes of $\sim$ (defined as in \cref{lem:cellequiv}) are in bijection with cells $Y_{FGT}^\circ$.
  If $[S,I,J]$ is the equivalence classes corresponding to $Y_{FGT}^\circ$, then $Y_{FGT}^\circ = {}_S \mathcal Y_{IJ}^\circ$.
   Explicitly, $Y_{FGT}^\circ = {}_S \mathcal Y_{IJ}^\circ$ if and only if $(F,G) = (I,J)$ and $\pi_{G \setminus F}(S) = \pi_{G \setminus F}(T)$.
\end{lem}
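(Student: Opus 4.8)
The plan is to name the bijection outright: it should send the $\sim$-class $[S,I,J]$ to the subset ${}_S\mathcal Y_{IJ}^\circ$ of $Y_V$. So I would (a) check this subset really is one of the cells $Y_{FGT}^\circ$, (b) pin down exactly when ${}_S\mathcal Y_{IJ}^\circ = Y_{FGT}^\circ$ — which gives the ``explicitly'' clause and with it well-definedness and injectivity — and (c) prove surjectivity, which is the only step requiring real work.

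For (a) and (b) I would work at the level of coordinates. On one side, the identity \eqref{expansion} established in the proof of \cref{lem:cellequiv} gives, for a triple $(S,I,J)$ with $I\subseteq J$ flats relatively acyclic in the tope $S$,
\[
  {}_S\mathcal Y_{IJ}^\circ=\bigl(\pi_J(V)\cap(0^I\times\R_{>0}^{S^+\cap(J\setminus I)}\times\R_{<0}^{S^-\cap(J\setminus I)})\bigr)\times\infty^{E\setminus J}.
\]
On the other side, applying \cref{schubertfacts}\cref{ziso} and then \cref{zstrata} to the definition of $Y_{FGT}^\circ$ yields
\[
  Y_{FGT}^\circ=\bigl(\pi_G(V)\cap(0^F\times\R_{>0}^{T^+}\times\R_{<0}^{T^-})\bigr)\times\infty^{E\setminus G}.
\]
The first identity exhibits ${}_S\mathcal Y_{IJ}^\circ$ as a single connected component of $Y_{IJ}^\circ=(\pi_J(V)\cap(0^I\times\R_{\neq 0}^{J\setminus I}))\times\infty^{E\setminus J}$, hence as $Y_{IJT}^\circ$ for the tope $T$ of $(M/I)|_J$ indexing that component; this settles (a). For (b) I would compare the two displays: the coordinates that are $\infty$ on a cell read off $E\setminus J$ (resp.\ $E\setminus G$), so an equality ${}_S\mathcal Y_{IJ}^\circ=Y_{FGT}^\circ$ forces $J=G$; since each cell is nonempty, the coordinates identically $0$ on it read off $I$ (resp.\ $F$), forcing $I=F$; and then the two relatively open orthants occupied along the $(G\setminus F)$-coordinates must coincide, which says precisely $\pi_{G\setminus F}(S)=\pi_{G\setminus F}(T)$. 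This proves the ``explicitly'' clause. Well-definedness (a $\sim$-class determines a single cell) and injectivity (triples with the same cell are $\sim$-equivalent) then follow immediately — and are in any case contained in \cref{lem:cellequiv}.

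The remaining point, surjectivity, is the main obstacle: given $F\subseteq G$ flats and $T$ a tope of $(M/F)|_G$, I must produce a tope $S$ of $M$ with $F$ and $G$ relatively acyclic in $S$ and $\pi_{G\setminus F}(S)=\pi_{G\setminus F}(T)$, for then ${}_S\mathcal Y_{FG}^\circ=Y_{FGT}^\circ$ by (b). Here is the construction I have in mind. By the definitions of contraction and restriction, $T=\pi_G(\tilde U)$ for some covector $\tilde U$ of $M$ with $F\subseteq\tilde U^0$; since the loops of $(M/F)|_G$ are exactly $F$, the tope $T$ has $T^0=F$, forcing $\tilde U^0\cap G=F$. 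Since $G$ is a flat, it is the zero set of some covector $Z$ of $M$. Set $X:=Z\circ\tilde U$ and then $S:=X\circ W$ for any tope $W$ of $M$. Using $Z^0=G$ one computes $X^0=Z^0\cap\tilde U^0=F$, $\pi_G(X)=\pi_G(\tilde U)=T$, and $Z\leq X$; and $S^0=X^0\cap W^0=\emptyset$, so $S$ is a tope with $X\leq S$ (hence also $Z\leq S$). Thus $F$ and $G$ are relatively acyclic in $S$, witnessed by $X$ and $Z$ respectively, so $(S,F,G)$ is a valid triple; and $X$ has no zero coordinate on $G\setminus F$, so $\pi_{G\setminus F}(S)=\pi_{G\setminus F}(X)=\pi_{G\setminus F}(T)$. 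By (b), ${}_S\mathcal Y_{FG}^\circ=Y_{FGT}^\circ$, completing surjectivity and the proof. Beyond keeping the several sign vectors straight ($S,W$ and the auxiliary $\tilde U,Z,X$ on $E$, versus $T$ on $G$), the only genuine idea is this threefold composition $Z\circ\tilde U\circ W$, arranged so that its two truncations $X$ and $Z$ simultaneously witness the relative acyclicity of $F$ and of $G$.
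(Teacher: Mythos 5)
Your proposal is correct and takes essentially the same approach as the paper: the key surjectivity construction — forming $X := Z\circ\tilde U$ from a covector $Z$ with $Z^0=G$ and a covector $\tilde U$ with $F\subseteq\tilde U^0$ projecting to $T$, then taking a tope $S\geq X$ (your $X\circ W$) — is the paper's $X:=\tilde G\circ\tilde F$ followed by ``any tope $S\geq X$''. The only tiny inaccuracy is that $S^0$ equals the loop set $L$ rather than $\emptyset$ if $M$ has loops, but this does not affect the argument since $X\circ W$ is a tope in any case.
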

\begin{proof}
  Given ${}_S \mathcal Y_{IJ}^\circ$, take $F := I$, $G := J$, and $T := \pi_G(X)$, where $X$ is the unique covector contained in $S$ with $X^0 = F$.
  Evidently, $Y_{FGT}$ is independent of the representative of $[S,I,J]$.

  For the inverse: let $F \subset G$ be flats of $M$ and let $T$ be a tope of $(M/F)|_G$.
  There are covectors $\tilde F, \tilde G$ of $M$ satisfying $\tilde F^0 = F$, $\pi_G(\tilde F) = T$, and $\tilde G^0 = G$.
  The composition $X := \tilde G \circ \tilde F$ then satisfies $\tilde G \leq X$, $X^0 = F$, and $\pi_G(X) = T$.
  Both $F$ and $G$ are relatively acyclic with respect to any tope $S \geq X$ of $M$, so there is an equivalence class $[S,F,G]$. This class is independent of $S$ because $\pi_{G \setminus F}(S) = \pi_{G \setminus F}(T)$.

  Under the bijection described above, we see that $Y_{FGT}$ corresponds to $[S,I,J]$ if and only if $(F,G) = (I,J)$ and $\pi_{G \setminus F}(S) = \pi_{G \setminus F}(T)$.
  In this case, ${}_S \mathcal Y_{IJ}^\circ = Y_{FGT}$ by \cref{expansion}.
\end{proof}
\begin{cor}\label{YV}
  Let $M$ be the oriented matroid of $V \subset \R^E$.
  \begin{enumerate}
  \item $Y_{FGT}$ contains $Y_{F'G'T'}^\circ$ if and only if $F \subset F' \subset G' \subset G$ and there is a tope $S$ of $M$ satisfying: $F,G,F',G'$ are all relatively acyclic in $S$, $\pi_{G\setminus F}(S) = \pi_{G \setminus F}(T)$ and $\pi_{G'\setminus F'}(S) = \pi_{G'\setminus F'}(T')$.\label{YVcontainment}
  \item The cells $Y_{F,G,T}^\circ$,  where $F \subset G$ run over flats of $M$ and $T$ runs over topes of $(M/F)|_G$, form a regular CW decomposition of $Y_V$.\label{YVregular}
  \end{enumerate}
\end{cor}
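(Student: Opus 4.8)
The plan is to derive both parts from \cref{topecor} — which already provides the full cell structure and topology of each reoriented variety ${}_T\mathcal Y_V$ — together with the identifications \cref{lem:cellequiv} and \cref{lem:othercellequiv}. No new topological input is needed; the work is essentially the bookkeeping between the three indexing schemes in play: pairs $(F,G)$ of flats of $M$ with a tope of $(M/F)|_G$, triples $(F,G,T)$ with $F\subset G$ relatively acyclic in a tope $T$ of $M$, and $\sim$-equivalence classes of the latter.

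For \cref{YVcontainment}, I would first invoke \cref{lem:othercellequiv} to fix a tope $S$ of $M$ in which $F$ and $G$ are relatively acyclic with $\pi_{G\setminus F}(S)=\pi_{G\setminus F}(T)$, so that $Y_{FGT}^\circ = {}_S\mathcal Y_{FG}^\circ$ and hence $Y_{FGT} = {}_S\mathcal Y_{FG}$. By \cref{topecor}\cref{topecor:boundary}, this closure is the disjoint union of the cells ${}_S\mathcal Y_{F'G'}^\circ$ with $F\subset F'\subset G'\subset G$ and $F',G'$ relatively acyclic in $S$, and by \cref{lem:othercellequiv} (applied with the same $S$) each of these is the cell $Y_{F'G'T'}^\circ$ for the unique tope $T'$ of $(M/F')|_{G'}$ with $\pi_{G'\setminus F'}(S)=\pi_{G'\setminus F'}(T')$. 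Reading this forward gives the ``only if'' direction, with $S$ the witnessing tope; reading it backward gives the ``if'' direction, since a common tope $S$ with the listed properties makes both $Y_{FGT}^\circ$ and $Y_{F'G'T'}^\circ$ cells of ${}_S\mathcal Y_{FG}$, and $F\subset F'\subset G'\subset G$ then forces $Y_{F'G'T'}^\circ\subset{}_S\mathcal Y_{FG}=Y_{FGT}$.

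For \cref{YVregular}, I would first note that the cells are disjoint and cover $Y_V$: by \cref{schubertfacts}\cref{zclosurerel}, $Y_V=\bigsqcup_{F\subset G}Y_{FG}^\circ$ over flats, and $Y_{FG}^\circ$ is, by definition, the disjoint union of its connected components $Y_{FGT}^\circ$ over topes $T$ of $(M/F)|_G$. Each $Y_{FGT}^\circ$ is an open ball of dimension $\rk(G)-\rk(F)$ by \cref{topecor}\cref{topecor:strata}, and its closure $Y_{FGT}={}_S\mathcal Y_{FG}$ is a closed ball: reorienting by $S^-$ identifies it with the closed stratum $\mathcal Y_{FG}$ of the non-negative matroid Schubert variety of $-_{S^-}(V)$, which is a ball by the proof of \cref{schubertthm}\cref{main:ball}. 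By \cref{YVcontainment}, the boundary $Y_{FGT}\setminus Y_{FGT}^\circ$ is a union of cells $Y_{F'G'T'}^\circ$ of dimension $\rk(G')-\rk(F')$, which is strictly smaller than $\rk(G)-\rk(F)$ because $F\subsetneq F'$ or $G'\subsetneq G$. It remains to see that the resulting decomposition is a regular CW structure on $Y_V$; for this I would write $Y_V=\bigcup_T{}_T\mathcal Y_V$ as a finite union of the regular CW balls of \cref{topecor}\cref{topecor:ball}, observe via \cref{lem:cellequiv} that any two of these pieces meet in a union of common cells ${}_T\mathcal Y_{FG}^\circ={}_{T'}\mathcal Y_{F'G'}^\circ$ whose closures ${}_T\mathcal Y_{FG}={}_{T'}\mathcal Y_{F'G'}$ also lie in the intersection — so the overlap is a subcomplex of each — and invoke the standard fact that a finite union of regular CW complexes glued along subcomplexes is a regular CW complex. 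Its cells are then the $\sim$-classes of the cells of the pieces, i.e. the $Y_{FGT}^\circ$ by \cref{lem:othercellequiv}.

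I expect the only real difficulty to be this gluing bookkeeping, and in particular verifying that a \emph{single} tope $S$ of $M$ can serve as the witness in \cref{YVcontainment}: that is exactly the role of the composition $X:=\tilde G\circ\tilde F$ in the proof of \cref{lem:othercellequiv}, so it should present no genuine obstacle. All of the topology — regularity, the ball structure, and the good behavior under restriction and contraction — is already packaged in \cref{topecor} and \cref{schubertthm}.
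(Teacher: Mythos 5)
Your proposal is correct and takes essentially the same route as the paper: both prove \cref{YV} by reducing to \cref{topecor} via the translation dictionary of \cref{lem:cellequiv} and \cref{lem:othercellequiv}, with the content being the bookkeeping among the three indexing schemes. You prove \cref{YVcontainment} first and then feed it into \cref{YVregular}, whereas the paper goes the other way, and you unpack the gluing-along-subcomplexes step in \cref{YVregular} more explicitly than the paper's terse ``together with \cref{lem:cellequiv} and \cref{topecor}, this implies...''; but these are presentational rather than mathematical differences, and both arguments are sound.
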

\begin{proof}
  We prove the second statement first.
  The set $\cup_T\, {}_T\mathcal Y_V$ is closed in $(\P_\R^1)^E$ and contains $V$; therefore, it is equal to $Y_V$.
 Together with \cref{lem:cellequiv} and \cref{topecor}, this implies the collection $\{{}_S \mathcal Y_{IJ}^\circ\}_{I,J,S}$ is a regular cell decomposition of $Y_V$.
 The cells in this decomposition are in fact the sets $Y_{FGT}^\circ$ by \cref{lem:othercellequiv}, completing the proof of \cref{YV}\cref{YVregular}.

 We now prove the first statement.
The closure of any cell is contained in some set ${}_S \mathcal Y_V$.
Hence, the closure of $Y_{FGT}^\circ$ contains $Y_{F'G'T'}^\circ$ if and only if there is a tope $S$ of $M$ such that  $Y_{FGT}^\circ = {}_S \mathcal Y^\circ_{FG}$, $Y_{F'G'T'}^\circ = {}_S \mathcal Y^\circ_{F'G'}$, and ${}_S \mathcal Y_{FG} \supset {}_S \mathcal Y^\circ_{F'G'}$.
By \cref{topecor} and \cref{lem:othercellequiv}, this is equivalent to the conditions specified by \cref{YV}\cref{YVcontainment}.
\end{proof}

\begin{rmk}
  If $V \subset \R^3$ is defined by $x_1 + x_2 - x_3 = 0$, then $Y_V$ has nontrivial first homology.
  This means $Y_V$ is not a shellable cell complex, since a shellable $d$-complex always has the homotopy type of a wedge of $d$-spheres \cite[Proposition 4.3]{B84}.
\end{rmk}
\printbibliography
\end{document}